\newcommand{\End}{\mathop{\mathrm{End}}\nolimits}
\newcommand{\el}{\mbox{$\mathcal{L}$}}
\newcommand{\ar}{\mbox{$\mathcal R$}}
\newcommand{\leqel}{\mbox{$\leq _{\mathcal L }$}}
\newcommand{\leqar}{\mbox{$\leq _{\mathcal R}$}}
\newcommand{\leqels}{\mbox{$\leq _{{\mathcal L}^{\ast}}$}}
\newcommand{\leqars}{\mbox{$\leq _{{\mathcal R}^{\ast}}$}}
\newcommand{\Aut}{\mathop{\mathrm{Aut}}\nolimits}
\newcommand\A{\Bbb A}
\newcommand\B{\Bbb B}
\newcommand\G{\Bbb G}
\newcommand\N{\Bbb N}
\newcommand{\en}{\operatorname{End}}
\newcommand{\pc}{\operatorname{PC}}
\newcommand{\Sing}{\operatorname{Sing}}
\newcommand{\ars}{\mbox{${\mathcal R}^{\ast}$}}
\newcommand{\els}{\mbox{${\mathcal L}^{\ast}$}}
\newcommand\T{\Bbb T}
\newcommand\s{^{\sharp}}
\newcommand\F{\Bbb {FG}}
\newcommand\im{\mbox{Im}\,}
\renewcommand\ker{\mbox{Ker}\,}
\newcommand\PC{\mbox{PC}}
\newcommand\R{\mathcal{R}}
\newcommand\gam{\gamma}
\newtheorem{lemma}{Lemma}[section]
\newtheorem{result}{Result}
\newtheorem{theorem}[lemma]{Theorem}
\newtheorem{prop}[lemma]{Proposition}
\newtheorem*{theorem*}{Theorem}
\theoremstyle{definition}
\newtheorem{defi}[lemma]{Definition}
\newtheorem{example}[lemma]{Example}
\newtheorem{question}[lemma]{Open Question}
\begin{document}

\title{Independence algebras,  basis algebras and the distributivity condition}
\thanks{}
\date{\today}
\author{Wolfram Bentz}
\address{School of Mathematics and Physical Sciences, University of Hull, Kingston-upon-Hull HU6 7RX, UK}
\email{w.bentz@hull.ac.uk}

\author{Victoria Gould}
\address{Department of Mathematics, University of York, Heslington, York YO10 5DD, UK}
\email{victoria.gould@york.ac.uk}

\begin{abstract} {\em Stable basis algebras} were
 introduced by Fountain and Gould and developed in a series of articles. 
 They form a class of universal algebras,  extending that of independence algebras, and reflecting  the way in which free modules over well-behaved domains generalise vector spaces. If a stable basis algebra 
 $\mathbb{B}$   satisfies the {\em distributivity condition}  (a condition satisfied by all the 
 previously known examples), it is a reduct of
 an  independence algebra $\mathbb{A}$. 
 Our first aim is to give an example of an independence algebra not satisfying the distributivity condition. 
 
Gould showed that if a stable basis algebra $\mathbb{B}$ with the distributivity condition has finite rank, then so does the independence algebra $\mathbb{A}$ of which it is a reduct, and
 in this case the endomorphism monoid $\End(\B)$ of $\B$ is a left order in the endomorphism monoid
 $\End(\A)$ of $\A$. We complete the picture by determining when $\End(\B)$ is a right, and hence a two-sided, order in  $\End(\A)$. In fact (for rank at least 2), this happens precisely when every element of 
 $\End(\A)$ can be written as $\alpha\s\beta$ where $\alpha,\beta\in\End(\B)$, 
 $\alpha\s$ is the inverse of $\alpha$ in a subgroup of $\End(\A)$
 and $\alpha$ and $\beta$ have the same kernel. This  is 
 equivalent to  $\End(\B)$ being  a special kind of left order in $\End(\A)$
 known as {\em straight}. 
 \end{abstract}

\keywords{Independence algebras, basis algebras, $v^*$-algebras, reduct, order}

\subjclass[2010]{08A05 20M20, 20M25}
\maketitle
\section{Introduction and Preliminaries}\label{sec:intro}

The second author introduced the study of the endomorphism monoids of  universal algebras called {\em $v^*$-algebras}, which she named {\em independence algebras}. 
 These algebras appear first in an article of
Narkiewicz \cite{nar} and were inspired by Marczewski's study of notions of
independence, initiated in \cite{mar}
 (see \cite{gratzer} and the survey article \cite{urb}).  Such algebras may be defined via
properties of the closure operator $\langle -\rangle$ which takes a
subset of an algebra to the subalgebra it generates. In 
an independence algebra,
$\langle -\rangle$ must satisfy the {\em exchange property}, which
guarantees that we have a well behaved notion of {\em rank} for
subalgebras and hence for endomorphisms,
generalising that of the dimension of a vector space. Further,
independence algebras are {\em relatively free}. Precise definitions and further details
 may be found in
\cite{gould}.   We
remark that sets, vector spaces and free acts over any group are 
 examples of independence algebras. A full classification, which we will draw upon for this article, is given by Urbanik in 
 \cite{urb}.
 
We denote the monoid of endomorphisms of an algebra $\A$ 
 by $\End (\A)$. The study of $\End (\A)$  for an independence algebra $\A$  has flourished over the last twenty years \cite{ araujo,cameronszabo,fountainandlewin,fountainandlewin2,gray},  providing the framework for understanding the common behaviours of several fundamental examples of monoids, including full transformation monoids and the multiplicative monoids of matrix rings over division rings. For example, 
  if $\A$ is an independence algebra of finite rank $n$, then the
set $\Sing(\A)$ of endomorphisms of rank strictly less than $n$ forms an idempotent generated ideal \cite{fountainandlewin}. We remark that idempotent generated semigroups are ubiquitous, since every (finite) semigroup embeds into a (finite)  idempotent generated 
 semigroup \cite{Howie66}.  The study of idempotent generated semigroups has acquired significant momentum, due to recent advances (see, for example, \cite{gray:2012}) building upon Nambooripad's classical theory of free idempotent generated semigroups over biordered sets \cite{nambooripad:1979}.

The endomorphism monoid of an independence algebra $\A$ is regular.
Surprisingly, regularity of $\en (\A)$
is not necessary for the above results concerning
idempotent generation.
For example, the results of Laffey \cite{Laffey83} show that if $\A$
is a free module of finite rank $n$ over a Euclidean domain, 
then
 the set of non-identity
idempotents
of $\en (\A)$ generates the subsemigroup of endomorphisms of
rank strictly less than $n$.  Fountain and the second author introduced in \cite{basisi} a class of algebras
called {\em  stable basis algebras} that generalise free modules 
over Euclidean domains, in an attempt to put the results of Laffey,
 and later work of Fountain
\cite{fountainintegermatrices} and Ruitenberg \cite{wim},
 into a more general setting, an aim achieved in \cite{basisiii}.
Stable basis algebras are in particular relatively free algebras in which
the closure operator $\pc$ (pure closure)
satisfies the exchange property. Certainly independence algebras are
stable basis algebras. Finitely generated
free left modules over {\color{black} left Ore} Bezout domains 
and finitely generated free left $\mathbb{T}$-acts over any cancellative
 monoid $\mathbb{T}$ such that  finitely generated left ideals of $\mathbb{T}$ are principal,
 are examples of stable basis algebras. We recall that a 
{\em Bezout domain} is an integral domain (not necessarily commutative)
in which all finitely generated left and right ideals are principal; clearly any Euclidean domain is a (left) Ore Bezout domain.
As for independence algebras,
rank is well defined for 
subalgebras and endomorphisms of basis
algebras,
where now the rank is defined via the operator $\pc$\footnote{In earlier articles, rank in a basis algebra was referred to as {\em PC-rank} but, as there is no ambiguity, we simply use the term {\em rank} here.}. We give requisite definitions as we proceed through this article.

If $\A$ and $\B$ are algebras such that
the universe (that is, the underlying set)  $B$ of $\B$ is contained in the
universe of $A$ of $\A$, then $\B$ is a {\em reduct} of $\A$ if every basic
operation of $\B$ is the restriction to $B$ of a basic operation of $\A$.
Theorem 4.14 of \cite{gouldI} shows that if $\B$ is a stable basis algebra  satisfying the
distributivity condition, then $\B$ is a reduct of an independence algebra $\A$, having the same rank as $\B$. The
distributivity condition is stated precisely in  Section~\ref{sec:example}: essentially it says that unary operations distribute over basic $n$-ary operations, for $n\geq 2$. All previously mentioned 
examples of
basis algebras and independence algebras satisfy the distributivity condition. The main aim of Section~\ref{sec:example} is to prove:

\begin{result} Not all independence algebras satisfy the distributivity condition.
\end{result} 

 {\color{black} We achieve the above by giving a particular example of an $\mathbb{S}$-homogeneous independence algebra. Our argument is highly technical, for reasons that we explain in Section~\ref{sec:example}.}
 
Classical ring theory tells us that if $\mathbb{R}$ 
is a left Ore domain with division ring of quotients 
{\color{black} $\mathbb{D}$},  then for any $n\in\N$ the $n\times n$ matrix ring
$M_n(\mathbb{R})$ has ring of left quotients
$M_n(\mathbb{D})$, that is, $M_n(\mathbb{R})$ is a left order in $M_n(\mathbb{D})$ in the sense of ring theory. This means that every element of $M_n(\mathbb{D})$ can be written as
$U^{-1}V$ where $U,V\in M_n(\mathbb{R})$, and every cancellable element of $M_n(\mathbb{R})$ has an inverse in $M_n(\mathbb{D})$. The
endomorphism monoid of an arbitrary algebra, indeed of an arbitrary
independence algebra $\A$, need not be a ring.  We will therefore use the following notion 
of order,
due originally to Fountain and Petrich \cite{fp}; here $a\s$ denotes the inverse of $a$ in a (any) subgroup.

\begin{defi}\label{defi:order} Let $\mathbb{S}$ be a subsemigroup of a semigroup $\mathbb{Q}$. Then $\mathbb{S}$ is  a {\em left (right) order} in $\mathbb{Q}$ and $\mathbb{Q}$ is a
{\em semigroup of left (right) quotients} of $\mathbb{S}$ if every $q\in Q$ can be written as
$q=a\s b$ ($q=ba\s$) where $a,b\in S$, and every square-cancellable element of $S$ lies in a subgroup of $\mathbb{Q}$. We say that $\mathbb{S}$ is an {\em order} in $\mathbb{Q}$ and $\mathbb{Q}$ is a {\em semigroup of quotients of $\mathbb{S}$} if $\mathbb{S}$ is both a left and a right order in $\mathbb{Q}$.

In case  $\mathbb{S}$ is  a  left  order in $\mathbb{Q}$ and $a,b$ can be chosen above such that $a$ and $b$ generate the same principal right ideals in $\mathbb{Q}$, then we say that  $\mathbb{S}$ is 
{\em a straight left order} in  $\mathbb{Q}$, with corresponding definitions for {\em straight right order} and {\em straight order}.  
 \end{defi}
 
 We do not need here the precise definition of being square-cancellable, referring the reader to \cite{fountaingouldorders} -  it is a strong necessary condition for an element to lie in a subgroup of an oversemigroup. From 
 Theorems 3.4 and 3.11 of \cite{fountaingouldorders}, if $\mathbb{R}$ is a subring of a ring
 $\mathbb{Q}$ with identity, and provided  $\mathbb{Q}$ satisfies some weak conditions, certainly held by matrix rings over division rings, then $\mathbb{R}$ is a left order in  $\mathbb{Q}$ in the sense of ring theory if and only if it is a left order in the sense of Definition~\ref{defi:order}. Theorem 5.3 of \cite{gouldI} states that if
 $\B$ is  a finite rank stable basis algebra satisfying the distributivity condition and
 $\A$ is the independence algebra constructed of which it is a reduct, then
 $\en(\B)$ is a left order in $\en(\A)$. Moreover, $\en(\B)$ is a straight left order in
 $\en(\A)$ if and only if the monoid $\mathbb{T}$ of non-constant unary term operations of $\B$ acts by bijections on the constant subalgebra and (in the case rank
 $\B$ is at least 2) is both left and right Ore.   The second aim of this paper, achieved in Section~\ref{sec:right}, is to complete the picture by proving a series of results which give the following:
 
 \begin{result} Let   $\B$ be a stable basis algebra of finite rank $n\geq 2$ satisfying the distributivity condition and let $\A$ be the independence algebra from \cite{gouldI} of which it is a reduct, so that $\en(\B)$ is a left order in $\en(\A)$. Then 
 $\en(\B)$ is an order in $\en(\A)$ if and only if it is a straight left order
 in $\en(\A)$. 
 \end{result}

 We remark that, on the surface, being a straight left order has nothing to do with being a (two-sided) order. {\color{black} For example, any left Ore cancellative monoid $\mathbb{S}$ is a straight left order in a group $\mathbb{G}$, but will not be a right order unless $\mathbb{S}$ is also right Ore. }
 We make some remarks on particular examples, including free modules over domains, at the end of Section~\ref{sec:right}.
 
With $\B$ and $\A$ as in the above theorem, $\en(\B)$  sits inside $\en(\A)$ in a particularly nice way, known as being {\em fully stratified} (see Section~\ref{sec:right}).  In Section~\ref{sec:FS} we briefly address the question of an infinite rank stable basis algebra $\B$. Fountain and the second author showed, by directly checking a list of criteria from \cite{gould ab}, that the semigroup of endomorphisms $\en_f(\B)$ of finite rank was a fully stratified straight left order (without specifying the semigroup of left quotients). However,
 they were assuming that $\en(\B)$ is {\em idempotent connected}, which followed from the incorrect Proposition II 2.6 of \cite{mvl}.
 Without assuming  that  $\en(\B)$ is idempotent connected, we have a longer list of conditions to check. We restrict ourselves to a special case in order to do so, proving:
 
 \begin{result}  Let $\B$ be a stable basis algebra  that satisfies the distributivity condition, has no constants, and is such that  $\mathbb{T}$ is commutative. Then $\End_f(\B)$ is a fully stratified straight left order.
 \end{result} 
 
We conclude the paper in Section~\ref{sec:open} with a number of open questions.

 {\color{black} We presume a passing familiarity with the notation of Universal Algebra but, other than this,  we give all  necessary background for both Universal Algebra and Semigroup Theory as we proceed through the paper;} further specialist details may be found in \cite{basisi,basisii,basisiii,gould,gouldI,urb}.  For standard notions from semigroup theory, we refer the reader to \cite{cp} and  \cite{howie} and for universal algebra to \cite{mckenzie}.

\section{The distributivity condition for independence algebras}\label{sec:example}

If $\B$ is a stable basis algebra, then the monoid of non-constant unary term operations will be denoted by $\mathbb{T}$. It was shown in \cite{basisii} that $\mathbb{T}$
is {\em left Ore} (also known as {\em right reversible}), which means that for any $a,b\in T$ there exist $u,v\in T$ such that $ua=vb$. 

We quickly repeat some definitions from clone theory. An (abstract) {\em clone} on a set $A$ is a set of non-nullary operations on $A$ that contains all projections and is closed under
composition. Given a set $W$ of non-nullary operations on $A$, the clone {\em generated} by $W$ is the smallest clone containing $W$. The {\em clone of an algebra} $\A$ with underlying set $A$ is the clone generated by all non-nullary basic operations of $\A$. An (abstract or algebraic) {\em extended clone} is defined correspondingly by removing the restriction on nullary operations. 

\begin{defi}\label{def:dist} A basis algebra $\B$ satisfies the {\em distributivity condition} if the clone of $\B$ contains a generating set $W$ of operations such that for all $a\in T$ and $n$-ary operations $t \in W$, where $n\geq 2$, we have
\[a(t(x_1,\hdots, x_n))=t(a(x_1),\hdots, a(x_n)).\]
\end{defi}

Note that Definition~\ref{def:dist} is stated here more precisely than in \cite{gouldI}, since to show $\B$ does {\em not} satisfy the distributivity condition, we wish to show it is impossible to choose {\em any} generating set for the clone that witnesses this. 
Indeed, we see in Subsection~\ref{subs:lin} an example where the most natural choice of generating set does not witness the condition in Definition~\ref{def:dist}, but we can easily find another that does.

 Certainly free modules over rings, and acts over monoids,
hence our canonical examples of stable basis algebras, 
satisfy this condition. We have observed that all independence
algebras are stable basis algebras. Independence algebras are essentially $v^*$-algebras, which were completely determined (up to clone equivalence) in the
1960's; we refer the reader to the survey article of Urbanik \cite{urb}
for the details. 

In the beginning of Section 4 of \cite{gouldI}, it is claimed all independence algebras can be shown to satisfy the distributivity condition, with the possible exceptions of the 
 $\mathbb{S}$-{\em homogeneous} algebras or
 $\mathbb{Q}$-{\em homogeneous} algebras, where  $\mathbb{S}$ is a monoid and
 $\mathbb{Q}$ a quasifield. In the following, we will prove this claim, and address the remaining two types of independence algebras. In particular, we show that the distributivity condition  does not necessarily  hold 
 for $\mathbb{S}$-homogeneous independence algebras, thereby establishing {\bf Result 1}.

 The following subsections  consider the various classification types from \cite{urb}, giving full definitions of the various types. We will first address a technical difference between independence algebras and $v^*$-algebras, and how it affects this classification. The reader might want to note that the next paragraphs deal only with this technicality, and can be skipped without affecting the understanding of our  results.
 
 The issue at hand originates in the way that $\langle \emptyset \rangle$, the subuniverse generated by the empty set, is defined. In \cite{urb}, this is the set of all elements that are  images of  constant clone operations, while in the now prevailing definition, this is the subuniverse generated by the images of nullary operations (in both cases, if the algebra has no operations of the relevant type, we set $\langle \emptyset \rangle=\emptyset$). 
 
 A consequence of the definition from \cite{urb} is that nullary operation can effectively be ignored for classification purposes. Hence the results in \cite{urb} amount to a classification up to the clone of the algebra. If an (abstract) clone $C$ on a set has constant functions,  then there are algebras with and without nullary operations whose (algebraic) 
  clone is $C$, and their status as a $v^*$-algebra is not be effected by this difference. 
%
%
 
 For an independence algebra $\A$ with at least two elements one can show that $\langle \emptyset \rangle$ is exactly the subalgebra $[\emptyset]$ consisting of the images on non-nullary constant clone operations, so that $\A$ is a $v^*$-algebra. On the other hand, for a  (non-trivial) $v^*$-algebra $\B$ to be an independence algebra the extended clone needs to includes nullary operations for the elements of $[\emptyset]$.
The classification of \cite{urb} will be used with this understanding.
For simplicity, we will not explicitly list any additional nullary operations below.
 
 \subsection{All independence algebras of rank $0$ have the distributivity property}
 Here all elements are images of constant clone functions. It follows that $\T$ only contains the identity. The distribution property now follows trivially.
 
 \subsection{All linear independence algebras have the distributivity property}\label{subs:lin}
 Let $D$ be a division ring, $A$ the underlying set of a linear space over $D$, and $A_0\subseteq A$ the underlying set of a subspace. The linear independence algebra $\A$ given by 
 $(D,A,A_0)$ has underlying set $A$ and its basic operations are all operations of the form
 \[f(x_1,\dots,x_n)=\sum_{i=1}^{n} \, \lambda_i x_i +a,\]
 for each $1\le n$, $\lambda_i \in D, a\in A_0$. Clearly, every clone function of $\A$ is basic.
 
 For $\lambda \in D, a \in A_0$, let $f_{\lambda, a}$ be given by $f_{\lambda, a}(x)=\lambda x +a$. Clearly, all $f_{\lambda, a}$ are unary operations in $\A$, and in fact the monoid $\T$ contains exactly the functions $f_{\lambda, a}$ with $\lambda \ne 0$. 
 
 Moreover let $g(x_1,x_2,x_3)=x_1-x_2+x_3$, which is also a basic operation of $\A$. We claim that $W=\{g, f_{\lambda,a}: \lambda \in D, a \in A_0\}$ witnesses the  distribution condition. 
Note that 
$x+y=g(x,f_{0,0}(x),y)$. As  $+$ and the $f_{\lambda,a}$ clearly generate the clone of $\A$, so does $W$. 

Finally, for all $\lambda \ne 0$ and $a\in A_0$,
$$f_{\lambda,a}(g(x_1,x_2,x_3))=\lambda x_1 -\lambda x_2 +\lambda x_3 +a$$ 
$$= \lambda x_1+a -\lambda x_2-a +\lambda x_3 +a =g(f_{\lambda,a}(x_1),f_{\lambda,a}(x_2),f_{\lambda,a}(x_3)),$$
and the distributivity condition holds.

\medskip
We remark that, if $A_0 \ne \{0\}$, then a (natural) set of generators involving $+$ does not witness the distributivity condition. For  if $a \in A_0 \setminus\{0\}$, then
$$f_{1,a}(+(0,0))=a \ne a+a =  +(f_{1,a}(0),f_{1,a}(0)).$$

   \subsection{All affine independence algebras have the distributivity property}
 Let $D$ be a division ring, $A$ the underlying set of a linear space over $D$, and $A_0\subseteq A$ the underlying set of a subspace. The affine independence algebra $\A$ given by 
 $(D,A,A_0)$ has underlying set $A$ and its basic operations are all operations of the form
 $$f(x_1,\dots,x_n)=\sum_{i=1}^{n} \, \lambda_i x_i +a,$$
 for each $1\le n$, $a \in A_0, \lambda_i \in D$, such that $\Sigma_i \lambda_i=1$. Clearly, every clone function of $\A$ is basic, and $\T$ consists of the functions $f_b$ with
 $f_b(x)=x+b$, for all $b \in A_0$.
 
We can use the set of all clone functions as a generating set $W$. It witnesses the distributivity property, as, for  all $i \le 2$, $a, b \in A_0$, $\lambda_i \in D$,
$$\Sigma_i \lambda_i f_b(x_i) +a=\Sigma_i \lambda_i x_i +\Sigma_i \lambda_i b +a=\Sigma_i \lambda_i x_i + b +a = f_b(\Sigma_i \lambda_i+a).$$

 \subsection{The exceptional independence algebra has the distributivity property}
 The exceptional independence algebra $\A$ can be described as the algebra on a $4$-element set $A$ with a unary operation $i$ and a ternary operation $q$. Here $i$ is a product of two disjoint transpositions, and $q(x_1,x_2,x_3)$ is either the unique element not among its arguments (if all are different), or the argument that appears at least twice. It is straightforward to check that $T=\{i,1_A\}$, where $1_A$ is the identity map on $A$, and that $W=\{i,q\}$ witnesses the distributivity property. 
 
   \subsection{All group action independence algebras have the distributivity property}
 Let $A_0\subseteq A$ be sets, and $G$ a group acting on $A$, such that, for all non-identity $g \in G$, the fixed points of $g$ all lie in $A_0$, and $g(A_0) \subseteq A_0$.
 
 The group action independence algebra $\A$ corresponding to $(A,A_0,g)$ has underlying set $A$ and operations 
 $$ f_{g,n,i}(x_1,\dots,x_n)= g(x_i), \quad\quad f_{a,n}(x_1,\dots,x_n)=a,$$
for all $1\le n, 1\le i\le n, g \in G, a \in A_0$.  

Clearly, all clone operations are essentially unary. It follows that we may generate the clone with unary operations alone. Hence the distributivity condition holds trivially.

\subsection{All  $\mathbb{Q}$-{homogeneous}  independence algebras have the distributivity property}

A {\em quasifield} $\mathbb{Q}$ is a set $Q$ with at least two elements, together with two binary operations denoted by 
juxtaposition, and $-$, such that the multiplicative operation has a zero $0$,
the non-zero elements form a group under multiplication, and four further axioms hold (see 4.3 of \cite{urb}). 
In a $\mathbb{Q}$-{homogeneous}  independence algebra $\A$ over a quasifield $\mathbb{Q}$, all basic $k$-ary operations $f$ satisfy
$$f(a-ba_1,\dots, a-ba_k)=a-bf(a_1,\dots,a_k),$$
for all $a,b_1,\dots, b_k\in Q$,
where subtraction and multiplication are the operations from $\mathbb{Q}$. Setting $b=0$
and $a_i=a$ for $1\leq i\leq k$, we obtain that 
$f(a,\dots,a)=a$. 
An inductive argument gives that the identity is the only unary clone operation of $\A$. The distributivity property now follows trivially.

\subsection{Not all $\mathbb{S}$-homogeneous independence algebras have the distributivity condition}

Let $\mathbb{S}$ be a monoid such that all the non-invertible elements are left zeros. A good example is a group, which is exactly what we will take below. An $n$-ary operation on
$S$ is said to be $\mathbb{S}$-{\em homogeneous} if for all $s,s_1,\hdots, s_n\in S$ we have
\[f(s_1,\hdots, s_n)s=f(s_1s,\hdots, s_ns).\]
Since $\mathbb{S}$ is a monoid, the operations $f(x)=sx$ ($s\in S$) are the only
$1$-homogeneous operations. A $\mathbb{S}$-homogeneous independence algebra has underlying set $S$ and the basic operations form a set $V$ of $n$-homogeneous operations on
$S$ containing all the $1$-homogeneous operations. The aim of this subsection is to show that with a careful choice of $\mathbb{S}$ and $V$, the resulting independence algebra
$\A$ does not satisfy the distributivity condition.

Let $Z=\{z_1,z_2,\dots\}$ be a countably infinite set
and let $E=\{z_2,z_4,\dots\}$.  Let $\F=\F(Z)$ be the free group  over $Z$ with 
identity $1$ and underlying
set  $FG(Z)$, which we denote for brevity by $F$. In the following,
concatenation will always refer to the group operation of $\F$. 

Let $F_+\subseteq F$ be the set of all non-identity elements of $F$ whose normal form does not include any negative exponents, so that $F_+$ is the underlying set of the copy of the free semigroup $\mathbb{FS}$ on $Z$ sitting inside $\F$. 

Since $F$ and $E$ are both countably infinite we may choose a function $h:F \to E$, where 
$w\mapsto h_w$, satisfying the following conditions:
\[\begin{array}{lrclllll}
(h1)&h_{z_1z_2^{-1}}&=&z_6&(h2)&h_{z_3z_2^{-1}}=z_8&&\\
(h3)& h_{z_1z_4^{-1}}&=&z_{10}&(h4)&h\mbox{ is injective.}&&\end{array}\]

Now let $\A=\langle F; \{\nu_c^\A\}_{c \in F}, g^\A\rangle$ where:
\begin{enumerate}
  \item for each $c \in F$, $\nu_c^\A$ is the unary operation given by $\nu_c^\A(w)=cw$ (i.e. $\nu_c^\A$ acts as  left translation by the element $c$ in the group $\F$);
  \item $g^\A$ is the binary operation given by $g^\A(w_1,w_2)=h_{w_1w_2^{-1}}w_2$.
\end{enumerate}

\begin{lemma} The algebra $\A$ is a monoid independence algebra with underlying monoid
$\F$.
\end{lemma}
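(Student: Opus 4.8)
The plan is to verify that $\A$ satisfies the defining conditions of an $\mathbb{S}$-homogeneous independence algebra with $\mathbb{S}=\F$, since by the discussion in this subsection such algebras are known to be independence algebras. Concretely, I must check three things: that the unary operations $\nu_c^\A$ are exactly the $1$-homogeneous operations on $F$ (namely left translations by elements of $F$); that the binary operation $g^\A$ is $\F$-homogeneous, i.e.\ $g^\A(w_1,w_2)w = g^\A(w_1w, w_2w)$ for all $w$; and that the resulting set $V = \{\nu_c^\A : c \in F\} \cup \{g^\A\}$ generates (as a clone) a set of $n$-homogeneous operations containing all $1$-homogeneous ones — the last of which includes confirming that $F$, under $\F$-homogeneity, genuinely yields an independence algebra structure (exchange property and relative freeness), which is the content of the general result about $\mathbb{S}$-homogeneous algebras that we are invoking.

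First I would record that $\F$ is a group in which every non-invertible element is a left zero — vacuously, since every element of a group is invertible — so $\mathbb{S}=\F$ satisfies the blanket hypothesis on $\mathbb{S}$ stated at the start of the subsection. Next, the $1$-homogeneous operations: if $f\colon F\to F$ satisfies $f(s)w = f(sw)$ for all $s,w$, then taking $s=1$ gives $f(w) = f(1)w$, so $f = \nu_{f(1)}^\A$; conversely each $\nu_c^\A$ is clearly $1$-homogeneous. Hence $\{\nu_c^\A\}_{c\in F}$ is precisely the set of $1$-homogeneous operations, as required. Then I would check $\F$-homogeneity of $g^\A$: for $w\in F$,
\[
g^\A(w_1,w_2)\,w = h_{w_1w_2^{-1}}\,w_2 w, \qquad
g^\A(w_1 w, w_2 w) = h_{(w_1 w)(w_2 w)^{-1}}\,(w_2 w) = h_{w_1 w w^{-1} w_2^{-1}}\,w_2 w = h_{w_1 w_2^{-1}}\,w_2 w,
\]
so the two agree. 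This is the crux of why the particular form $g^\A(w_1,w_2) = h_{w_1w_2^{-1}}w_2$ was chosen: the combination $w_1w_2^{-1}$ is translation-invariant, and multiplying the scalar output $h_{w_1w_2^{-1}}$ on the right by $w_2$ restores the homogeneity.

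Having established that all basic operations are $\F$-homogeneous (and that all $1$-homogeneous operations occur among them), I would appeal to the general description of $\mathbb{S}$-homogeneous independence algebras recalled above: a structure on $S$ whose basic operations form a set $V$ of $\mathbb{S}$-homogeneous operations containing all $1$-homogeneous operations is an independence algebra, here of the "monoid" type with underlying monoid $\mathbb{S}=\F$. I expect the main obstacle to be bookkeeping rather than conceptual: one must be careful that "the basic operations are $\mathbb{S}$-homogeneous" is genuinely all that is needed (as opposed to some closure or saturation condition on $V$), and that no hidden constant or nullary operations are required — but since $\F$ is a group with no proper nonempty subalgebra issues of the constant-subalgebra type, and $[\emptyset]=\emptyset$ here, this is clean. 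The verification of $\F$-homogeneity for $g^\A$ and the identification of the $1$-homogeneous operations are the only substantive computations, and both are short.
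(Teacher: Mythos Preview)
Your proposal is correct and follows essentially the same route as the paper: verify that $\F$, being a group, vacuously satisfies the hypothesis on $\mathbb{S}$; observe that the $1$-homogeneous operations are exactly the left translations $\nu_c^\A$, all of which are basic; and check directly that $g^\A(w_1w,w_2w)=h_{w_1w_2^{-1}}w_2w=g^\A(w_1,w_2)w$. The paper's proof is simply a terser version of exactly these three steps, and your worry about a possible ``closure or saturation condition on $V$'' is unnecessary, since the definition only requires the \emph{basic} operations to be $\mathbb{S}$-homogeneous and to contain all $1$-homogeneous ones.
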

\begin{proof} We first remark that as $\F$ is a group, it has no non-invertible elements, and so is a suitable monoid from which to build a monoid independence algebra. 
We have remarked that $F$-homogeneous unary operations are left translations
and by construction, all left  translations $\nu_c^\A, c\in F$ are  basic in 
$\A$.  

Finally, for all $w_1,w_2,w' \in F$, we have $$g^\A(w_1w',w_2w')=h_{w_1w'(w_2w')^{-1}}w_2w'=h_{w_1w_2^{-1}}w_2w'=g^\A(w_1,w_2)w',$$
so that $g^\A$ is $F$-homogeneous and hence $\A$ is a monoid independence algebra.
\end{proof}

In order to show that $\A$ does not have the distributivity property, we need to examine the clone of $\A$. We let $L=\{\{\nu_c\}_{c \in F}, g\}$, be the language of $\A$,
$X=\{x_1,x_2,\dots,\}$ a countably infinite set of variables (which we may think of as being linearly ordered according to their subscripts), and $T$ the set of terms in the language $L$ over $X$.  The elements of the clone are obtained from 
the  interpretation in $\A$ of elements in $T$ (and their compositions with projections).

For $i,j \in \mathbb{N}$ with $i\le j$ let $\pi^j_i:F^j\to F^i$ be given by $(w_1,\dots,w_j)\mapsto  (w_1,\dots,w_i)$, i.e. $\pi^j_i$ is the projection to the first $i$ coordinates.

For each term $t \in T$, let $a(t)$ be the largest $n$ such that the variable  $x_n$ occurs in $t$. We define a function $\bar t:F^{a(t)} \to F$ by structural induction, as we now describe.
We remark that  $\bar t$ will essentially be the term function associated with $t$ and usually denoted by $t^\A$. However, our  definition of $\bar t$ is needed due to some minor technicalities involving the arities of term functions.

For each $i\in\mathbb{N}$  set $\bar x_i(w_1,\dots,w_i)=w_i$. If $t=\nu_c(s)$ for some $s$, then noting that $a(t)=a(s)$, let $\bar t=\nu_c^\A \circ \bar s$.  Finally, for $t=g(t_1,t_2)$, we set 
 $$\bar t=g^\A \circ \left(\bar t_1 \circ \pi^{a(t)}_{a(t_1)},\bar t_2 \circ \pi^{a(t)}_{a(t_2)}\right),$$
 which is well-defined, as $a(t) \ge a(t_1), a(t_2)$. With some abuse of terminology, we  will refer to all functions of the form $\bar t$ as term functions. 
 


Given a term $t \in T$, let $\nu(t)$ to be the set of  $c\in F$ such that  $\nu_c$ appears in $t$. We define the \emph{content} of $t$, denoted $C_t$,  by
\[C_t=\bigcup_{c\in \nu(t)}\{ z_i\in Z: z_i \mbox{ appears in the normal form of }c\}.\]
Note that
$C_t\subseteq Z$ and is finite. 

For each $t \in T$, we define $t^* \in \mathbb{N}$ by structural induction as follows: if $t=x_i$ for some $i$, then $t^*=i$, if $t=\nu_c(t_1)$ for some
$t_1 \in T$, then $t^*={t^*_1}$, and if $t=g(t_1,t_2)$ for some $t_1,t_2 \in T$, we set $t^*={t^*_2}$. It is easy to see that $t^*$ is the index of the variable that appears syntactically in the ``right-most" position of $t$ and clearly, $t^*\leq a(t)$.

The following lemma characterizes behaviour of the functions of the form  $\bar t$, by connecting them to the group structure on the underlying set $F$ of $\mathbb{A}$. This will be essential to our later arguments.

\begin{lemma}\label{lm:structure} Let $t\in T$ such that $a(t)=n$. 
 Then one of the following holds:
\begin{enumerate} 
\item\label{form1} there exist a $w \in FG(E \cup C_t)$ such that for all $\vec y \in F^n$, 
 $$\bar t(\vec y)= w y_{t^*};$$
\item 
there exists a function $f: F^n \to FG(E \cup C_t)$ such that for all $\vec y \in F^n$, 
$$\bar t(\vec y)= f(\vec y) y_{t^*}.$$
In addition, there are  sequences $z_{j_1}, z_{j_2},\ldots$ on $Z$, and $\vec \mu_1, \vec \mu_2,\ldots$ on $ (F_+)^n$ such that
\begin{enumerate}\label{form2}
\item $j_i \ne j_{i'}$ for $i \ne i'$, and
\item
$z_{j_i}$ appears in the normal form of $f(\vec \mu_i)$ with a positive exponent.
\end{enumerate}
\end{enumerate}
\end{lemma}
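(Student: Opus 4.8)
The plan is to proceed by structural induction on the term $t \in T$, establishing the dichotomy (\ref{form1})/(\ref{form2}) together with enough bookkeeping to make the inductive step go through. The base case $t = x_i$ is immediate: here $a(t) = t^* = i$, and $\bar t(\vec y) = y_i = 1 \cdot y_{t^*}$, so alternative (\ref{form1}) holds with $w = 1 \in FG(E \cup C_t)$. The case $t = \nu_c(s)$ is also easy: since $a(t) = a(s)$ and $t^* = s^*$, and $\bar t(\vec y) = c\,\bar s(\vec y)$, if $s$ satisfies (\ref{form1}) with witness $w$, then $t$ satisfies (\ref{form1}) with witness $cw$, noting $C_s \subseteq C_t$ and the normal form of $c$ uses only letters in $C_t$, so $cw \in FG(E \cup C_t)$; if $s$ satisfies (\ref{form2}) with $f$, then $t$ satisfies (\ref{form2}) with $c \cdot f$, and the sequences $z_{j_i}, \vec\mu_i$ are inherited unchanged — the positive-exponent condition on $f(\vec\mu_i)$ is preserved because left-multiplication by the fixed element $c$ can cancel only a bounded-length prefix, so discarding finitely many indices from the sequence restores (b) (here we use that $j_i \ne j_{i'}$, so infinitely many of the $z_{j_i}$ avoid any fixed finite set of letters, in particular those occurring in $c$).

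The substantive case is $t = g(t_1, t_2)$, where by definition
$$\bar t(\vec y) = g^\A\bigl(\bar t_1(\pi^n_{a(t_1)}\vec y),\ \bar t_2(\pi^n_{a(t_2)}\vec y)\bigr) = h_{u_1 u_2^{-1}}\, u_2,$$
writing $u_1 = \bar t_1(\pi^n_{a(t_1)}\vec y)$ and $u_2 = \bar t_2(\pi^n_{a(t_2)}\vec y)$, and recalling $t^* = t_2^*$. By the induction hypothesis applied to $t_2$, we have $u_2 = f_2(\vec y)\, y_{t^*}$ for some $f_2$ valued in $FG(E \cup C_{t_2}) \subseteq FG(E \cup C_t)$ (with $f_2$ constant in case (\ref{form1})). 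Since $h$ maps into $E$, the factor $h_{u_1 u_2^{-1}}$ always lies in $FG(E) \subseteq FG(E \cup C_t)$. Hence $\bar t(\vec y) = f(\vec y)\, y_{t^*}$ with $f(\vec y) = h_{u_1 u_2^{-1}}\, f_2(\vec y)$, which is already of the right shape for case (\ref{form2}); what remains is to decide whether $f$ is in fact a constant (giving (\ref{form1})), and if not, to produce the witnessing sequences for (\ref{form2})(a),(b). The natural split is: if $t_2$ falls under case (\ref{form2}), then I claim $t$ does too, using the sequences for $t_2$ — one must check that prepending the factor $h_{u_1 u_2^{-1}}$ does not destroy the positive-exponent letters $z_{j_i}$; since $h_{u_1u_2^{-1}} \in FG(E)$ and the relevant $z_{j_i}$ can be chosen (passing to a subsequence) to avoid $E$ entirely, no cancellation at the join occurs. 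If instead $t_2$ falls under case (\ref{form1}) (so $f_2$ is a constant $w_2$), then $f(\vec y) = h_{u_1 u_2^{-1}} w_2$ depends on $\vec y$ only through $u_1$, hence only through $\bar t_1$; if $\bar t_1$ is also of form (\ref{form1}) then $u_1 u_2^{-1}$ is constant and $f$ is constant, so $t$ is of form (\ref{form1}); otherwise $\bar t_1$ is genuinely non-constant, and I must manufacture the sequences.

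I expect the main obstacle to be exactly this last sub-case: producing the sequences $z_{j_i}$ and $\vec\mu_i \in (F_+)^n$ when $\bar t_1$ varies. The idea is to exploit injectivity of $h$ (condition (h4)): as the argument $u_1 u_2^{-1}$ ranges over infinitely many distinct values of $F$ — which one arranges by letting $\vec y$ range over suitable tuples in $(F_+)^n$, using the induction hypothesis for $t_1$ to see $\bar t_1$ takes infinitely many values there — the images $h_{u_1 u_2^{-1}}$ are infinitely many distinct elements of $E = \{z_2, z_4, \dots\}$, i.e. infinitely many distinct single letters $z_{j_i}$ with $j_i$ even and pairwise distinct. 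Each such $z_{j_i}$ sits at the front of $f(\vec\mu_i) = z_{j_i} w_2$ with exponent $+1$, and one only needs to discard those finitely many $i$ for which $z_{j_i}$ happens to occur in $w_2$ (with a cancelling exponent) to ensure (\ref{form2})(b); (\ref{form2})(a) is automatic from distinctness of the $h$-values. The care needed is in verifying that evaluating at tuples from $(F_+)^n$ really does force $\bar t_1$ through infinitely many values — here one invokes the induction hypothesis structurally and the fact that $F_+$ is infinite and closed under the group multiplication that builds up $\bar t_1$ from its sub-terms via left-translations; a secondary point is to confirm that in case (\ref{form2}) the sequences may always be taken with entries in $(F_+)^n$ rather than all of $F^n$, which should be maintainable throughout the induction by the same closure observations, possibly after replacing a given sequence by one obtained from composing with left-translations by elements of $F_+$.
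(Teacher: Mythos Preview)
Your structural-induction scheme matches the paper's, and the base case and the $\nu_c$-case are handled correctly. The real gap is in the case $t = g(t_1,t_2)$ with $t_2$ of form~(\ref{form1}), where you write that ``$f(\vec y) = h_{u_1 u_2^{-1}} w_2$ depends on $\vec y$ only through $u_1$'' and that ``if $\bar t_1$ is also of form~(\ref{form1}) then $u_1 u_2^{-1}$ is constant.'' Both claims are false in general. With $t_2$ of form~(\ref{form1}) one has $u_2 = w_2\, y_{t_2^*}$, which still depends on $y_{t_2^*}$; and if $t_1$ is also of form~(\ref{form1}) then $u_1 = w_1\, y_{t_1^*}$, giving
\[
u_1 u_2^{-1} \;=\; w_1\, y_{t_1^*}\, y_{t_2^*}^{-1}\, w_2^{-1}.
\]
This collapses to the constant $w_1 w_2^{-1}$ \emph{only when $t_1^* = t_2^*$}. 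When $t_1^* \neq t_2^*$ the quotient genuinely depends on two distinct coordinates of $\vec y$, so $t$ lands in case~(\ref{form2}), and one must manufacture the sequences by varying $y_{t_1^*}$ and $y_{t_2^*}$ independently over generators outside $E \cup C_t$, so that the middle block $y_{t_1^*} y_{t_2^*}^{-1}$ survives in normal form and, via injectivity of $h$, forces infinitely many distinct $h$-values. The paper accordingly splits the sub-case ``$t_2$ of form~(\ref{form1})'' four ways, according to whether $t_1$ is of form~(\ref{form1}) or~(\ref{form2}) \emph{and} whether $t_1^* = t_2^*$; your two-way split omits the $t_1^* \neq t_2^*$ branches entirely and would wrongly place some terms in case~(\ref{form1}).

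A smaller point: in the sub-case where $t_2$ is of form~(\ref{form2}), your argument that prepending $h_{u_1 u_2^{-1}}$ cannot destroy the positive occurrence of $z_{j_i}$ in $f_2(\vec\mu_i)$ does not work as stated. You propose passing to a subsequence of the $z_{j_i}$ that avoids $E$, but since $f_2$ takes values in $FG(E \cup C_{t_2})$ with $C_{t_2}$ finite, cofinitely many of the (pairwise distinct) $z_{j_i}$ must actually lie in $E$, so no such subsequence exists. The correct observation, used in the paper, is that $h_{u_1 u_2^{-1}}$ is a single generator in $E \subseteq F_+$; left-multiplying by an element of $F_+$ can only cancel \emph{negative}-exponent letters at the left end of $f_2(\vec\mu_i)$, so every positive-exponent occurrence of $z_{j_i}$ survives intact.
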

\begin{proof} We remark that if $f$ is as in Condition~(\ref{form2}), then, in particular, the image of $f$ is infinite, {\color{black} indeed, the image of $f$ restricted to
$(F_+)^n$ is infinite}. 

We prove the lemma by induction over the structure of $t$. 
If $t=x_i$ for some $i$, then
$n=a(t)=t^*=i$ and $\bar t(\vec y)=
\bar{x}_i(y_1,\hdots, y_i)=y_i=1y_{t^*}$ and the result holds with $w=1$ in  
Condition~(\ref{form1}).

Assume for induction that the result holds for any proper subterm of $t$. 

\noindent{\em Case (i)} Suppose first that
$t=\nu_\sigma (t_1)$ for some term $t_1$, so that $t^*=t_1^*$.
Then $n=a(t)=a(t_1)$ and
 by induction, $ t_1$ satisfies the conditions of the lemma.   

\noindent{\em Case (i)(a)} If $\bar t_1(\vec y)=w y_{t^*_1}$ for some {\color{black} $w \in FG(E\cup C_{t_1})$,} then 
 $\bar t(\vec y)= \sigma \bar t_1(\vec y)= \sigma w y_{t^*_1}=\sigma w y_{t^*}$. 
Moreover,  $\sigma w \subseteq FG(E \cup C_t)$,
as $C_t = C_{t_1} \cup C_\sigma$, where $
C_\sigma$ is the set of generators that appear in the normal form of $\sigma$.
Hence $\bar t(\vec y)$  satisfies Condition~(\ref{form1}).


\noindent{\em Case (i)(b)}  Now suppose that Condition~(\ref{form2}) holds
for $t_1$, so there exists 
{\color{black} $f_1: F^n \to FG(E \cup C_{t_1})$} and sequences $(z_{j_i})_{i\in\mathbb{N}}$ and $ (\vec \mu_i)_{i\in\mathbb{N}}$  as in (2),  such that  $\bar t_1(\vec y)= f_1(\vec y) y_{t^*_1}$. Then
$$\bar t(\vec y)= \sigma \bar t_1(\vec y)= \sigma f_1(\vec y) y_{t^*_1}=\sigma f_1(\vec y) y_{t^*}.$$


{\color{black} For each $i\in \mathbb{N}$ let  $w_i=f_1(\vec \mu_i)$
and put $ \sigma w_i =:\tau_i$.
The normal form of $w_i$ contains $z_{j_i}$ with a positive exponent, so
the normal form of $\tau_i$ will do so as well, unless $z_{j_i}$ cancels against a $z_{j_i}^{-1}$. 
But, considering $\sigma$, there are only finitely many indices $i$ for which $z_{j_i}^{-1}$ appears in the normal form of $\sigma$. It follows that for infinitely many values $i$, 
the element $ \tau_i\in F$ contains $z_{j_i}$ in its normal form with a positive exponent. }

Define {\color{black}$f: F^n \to F$} by  $f=\nu_\sigma^\A \circ f_1$. By the assumption on $f_1$, and as $C_t=C_{t_1} \cup C_\sigma$, we have $f:F^n\rightarrow  FG(E \cup C_t)$.

We obtain that $\bar t(\vec y)= f(\vec y)y_{t^*}$. It is easy to see that  $\bar t$ satisfies Condition~(\ref{form2}),  with  the sequences $(z_{j_i})_{i\in\mathbb{N}}$ and $(\vec \mu_i)_{i\in\mathbb{N}}$ obtained from the corresponding sequences for $\bar t_1$ by removing finitely many elements.  

\noindent{\em Case (ii)}  We now consider the case that $t=g(t_1,t_2)$ for some terms $t_1,t_2$. By induction the lemma holds for $t_1$ and $t_2$. Let $n_1=a(t_1), n_2=a(t_2)$, so that $n$
is the maximum of $n_1$ and $n_2$, and $t^*=t_2^*$. {\color{black} Notice that $C_t=C_{t_1}\cup C_{t_2}$.}

\noindent{\em Case (ii)(a)} Assume first that  Condition~(\ref{form2}) holds for
$t_2$, so that 
$$\bar t_2\left(\pi^{n}_{n_2}(\vec y)\right)=f_2\left(\pi^{n}_{n_2}(\vec y)\right) y_{t^*_2}$$ for some 
$f_2:F^{n_2}\rightarrow FG(E\cup C_{t_2})$, such that there are sequences $(z_{j_i})_{i\in\mathbb{N}}$ and    $(\vec \mu_i)_{i\in\mathbb{N}}$  as   in  (\ref{form2}).

Since $t^*=t_2^*$ we have
$$\bar t(\vec y)= h_{\bar t_1\left(\pi^{n}_{n_1}(\vec y)\right)\left(\bar t_2\left(\pi^{n}_{n_2}(\vec y)\right)\right)^{-1}} \bar t_2\left(\pi^{n}_{n_2}(\vec y)\right)=  h_{\bar t_1\left(\pi^{n}_{n_1}(\vec y)\right)\left(\bar t_2\left(\pi^{n}_{n_2}(\vec y)\right)\right)^{-1}} f_2\left(\pi^{n}_{n_2}(\vec y)\right) y_{t^*}.$$

Define $f:F^n\to F$ by 
$$f(\vec y)=h_{\bar t_1\left(\pi^{n}_{n_1}(\vec y)\right)\left(\bar t_2\left(\pi^{n}_{n_2}(\vec y)\right)\right)^{-1}} f_2\left(\pi^{n}_{n_2}(\vec y)\right).$$ 
We have that $\bar t (\vec y)=f(\vec y) y_{t^*}$, as required. Moreover, 
$f:F^n\rightarrow FG(E \cup C_t)$, by the conditions on $f_2$ and since  the image of $h$ lies in $E$.
Let   $\vec\mu_i'\in F_+^n$ be obtained by extending $\vec\mu_i$ to arity $n$ with $n-n_2$ arbitrary elements from $F_+$. By Condition (2) for $t_2$,  we have that
 $z_{j_i}$ appears in
the normal form of $f_2(\vec \mu_i)=w_i$ with a positive exponent. 
Now $$f(\vec \mu_i')= h_{\bar t_1\left(\pi^{n}_{n_1}(\vec \mu_i')\right)\left(\bar t_2(\vec \mu_i)\right)^{-1}}w_i.$$ 
By definition of $h$, the first factor is just an element of $E$, 
so in particular an element of $F_+$. It follows that the generator $z_{j_i}$ in $w_i$ cannot cancel, and hence appears in the normal 
form of $f(\vec \mu_i')$. 

Thus $\bar t$ satisfies Condition (\ref{form2})\ with $f$ and the sequences $(\vec \mu_i')_{i\in\mathbb{N}}$ and $ (z_{j_i})_{i\in\mathbb{N}}$. 


\noindent{\em Case (ii)(b)} For our final case we assume  that $\bar t_2\left(\pi^n_{n_2}(\vec y)\right) = w_2 y_{t^*_2}$, for some $w_2 \in FG(E \cup C_{t_2})$.
We make four further case distinctions.

\begin{enumerate}
\item $\bar t_1$ satisfies Condition (\ref{form1}) and $t^*_1=t^*_2$. {\color{black}
We have for $\vec u\in F^{n_1}$ that $t_1(\vec u)=w_1u_{t_1^*}$ for some $w_1$, 
and then for $\vec y\in F^n$ we see that
$\bar t_1\left(\pi^n_{n_1}(\vec y)\right)= w_1 y_{t^*_1}$, and 
\[\bar t(\vec y)= h_{\bar t_1\left(\pi^n_{n_1}(\vec y)\right)
(\bar t_2\left(\pi^n_{n_2}(\vec y)\right)^{-1}}\bar t_2\left(\pi^n_{n_2}(\vec y)\right)=
h_{w_1y_{t_1^*}(w_2y_{t_2}^*)^{-1}} w_2 y_{t^*_2}=
h_{w_1w_2^{-1}} w_2 y_{t^*}\]
as $t^*_1=t^*_2$.  Thus $\bar t$  also satisfies Condition (\ref{form1}), as 
$h_{w_1w_2^{-1}} w_2 \in FG(E \cup C_t)$.}

\item $\bar t_1$ satisfies Condition~(\ref{form2}) with respect to {\color{black} $f_1:F^{n_1}\rightarrow FG(E\cup C_{t_1})$} and  $t^*_1=t^*_2$. 


In this case  
$$\bar t(\vec y)=h_{f_1\left(\pi^n_{n_1}(\vec y)\right)w_2^{-1}}w_2 y_{t^*}.$$ We claim that $\bar t$ satisfies Condition (\ref{form2}). Let {\color{black} $f$} be given by $$f(\vec y)=h_{f_1\left(\pi^n_{n_1}(\vec y)\right)w_2^{-1}}w_2.$$
Then $f:F^n\rightarrow FG(E\cup C_t)$ by the same argument as above, so it remains to construct appropriate sequences $(\vec \mu_i)_{i\in\mathbb{N}}$ and $(z_{j_i})_{i\in\mathbb{N}}$. 

By the remark at the beginning of this proof,  $f_1\left(\pi^n_{n_1}\left(F_+^{n}\right)\right)$
is infinite and hence so is $f_1\left(\pi^n_{n_1}\left(F_+^{n}\right)\right)w_2^{-1}$. 
The function $h$ is injective and maps into $E$, so it follows that   $h_{f_1(\pi^n_{n_1} (\vec x))w_2^{-1}}$ takes on infinitely many values $z_{j_i}$ in $E$ as $\vec x$ runs over $F^{n}_+$. 
Only finitely many of these values can cancel against a generator in the normal form of $w_2$. The existence of $(\vec \mu_i)_{i\in\mathbb{N}}$ and $(z_{j_i})_{i\in\mathbb{N}}$ follows.

\item $\bar t_1$ satisfies Condition (\ref{form1}) and $t_1^* \ne t_2^*$. In this case
we have that
 $\bar t_1(\vec y)= w_1 y_{t_1^*}$ for some $w_1 \in FG(E \cup C_{t_1})$, for all $\vec y \in F^{n_1}$, and thus
 $$\bar t(\vec y)=h_{w_1y_{t_1^*} (y_{t_2^*})^{-1} w_2^{-1}} w_2 y_{t^*}.$$ 
 {\color{black} Consider the set $P\subset Z^2$ of pairs $(u_{1},u_{2})$ for which $u_{1} \ne u_{2}$,
 and neither $u_{1},u_{2}$ nor their inverses appear in the normal forms
of $w_1$ or $ w_2$; clearly $P$ is  infinite. For any $(u_1,u_2)\in P$, the normal form of $w_1 u_{1} u_{2}^{-1} w_2^{-1}$ contains the subexpression $u_1u_2^{-1}$, and these are the only occurrences of $u_1$ and $u_2$ in the 
  normal form. It follows that $w_1 u_{1} u_{2}^{-1} w_2^{-1}$
  takes on only distinct and hence infinitely many elements of $F$ as $(u_{1},u_{2})$ runs through $P$. As 
 $h$ is injective, $h_{w_1 u_{1} u_{2}^{-1} w_2^{-1}}$
   also takes on infinitely many values from $E$ as  $(u_{1},u_{2})$ runs through $P$. Only finitely many of those values can cancel against generators from the normal form of 
   $w_2$. Removing the corresponding pairs from $P$ we see that
 $\bar t$ satisfies Condition (\ref{form2}) with respect to 
$f:F^{n}\rightarrow FG(E\cup C_{t})$, where
  $$f(\vec y)= h_{w_1y_{t^*_1} y_{t^*_2}^{-1} w_2^{-1}} w_2,$$
with the $\vec \mu_i\in F_+^n$ being chosen so that  $\vec \mu_i(t^*_1)=u_{1}^{(i)}$, $\vec \mu_i(t^*_2)=u_{2}^{(i)}$ and with arbitrary elements of $F_+$ in all other coordinates, where $(u_{1}^{(i)}, u_{2}^{(i)})$ runs over a cofinite subset of $P$ and 
$\left(z_{j_i}\right)_{i\in\mathbb{N}}=(h_{w_1 u_{1}^{(i)} (u_{2}^{(i)})^{-1} w_2^{-1}})_{i\in\mathbb{N}}.$}
 \item $\bar t_1$ satisfies Condition (\ref{form2}) with respect to  {\color{black} $f_1:F^{n_1}\rightarrow FG(E\cup C_{t_1})$, }
and  $t^*_1\neq t^*_2$.

  This case is similar to the previous one. We have that 
 $$\bar t(\vec y)=h_{f_1\left(\pi_{n_1}^n(\vec y)\right)y_{t^*_1} y_{t^*_2}^{-1} w_2^{-1}} w_2 y_{t^*}.$$
 {\color{black}Let $P\subset Z^2$ be the set of pairs $(u_{1},u_{2})$ for which 
 $u_{1},u_{2} \notin E\cup C_t$, $u_{1} \neq u_{2}$  and
 neither $u_{1}$, $u_{2}$ nor their inverses appear in the normal form
of $w_2$; clearly $P$ is infinite.
  If $\vec z \in F^n_+$ and $(u_{1},u_{2}) \in P$, then  in the expression 
  $$f_1\left(\pi_{n_1}^n(\vec z)\right) u_{1} u_{2}^{-1} w_2^{-1},$$ $u_{1}$ and $u_{2}^{-1}$ cannot cancel against any generators from the normal forms of $w_2^{-1}$ and $f_1\left(\pi_{n_1}^n(\vec z)\right)$, in the latter case because $f_1$ maps into $F(E\cup C_t)$.}
Arguing as previously we see that $\bar t$ satisfies Condition \ref{form2}.
  
  \end{enumerate}
By structural induction, the lemma holds for all terms $t\in T$.
  \end{proof}

We are ready show our main result.

\begin{theorem}\label{thm:main} The independence algebra $\A$ does not satisfy the distributivity property.
\end{theorem}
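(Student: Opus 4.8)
The plan is to argue by contradiction: suppose the clone of $\A$ has a generating set $W$ witnessing the distributivity condition as in Definition~\ref{def:dist}. The key structural fact is Lemma~\ref{lm:structure}, which tells us every term function $\bar t$ of $\A$ has either the ``rigid'' form (\ref{form1}), $\bar t(\vec y)=wy_{t^*}$ for a fixed $w$, or the ``spreading'' form (\ref{form2}), where the left factor $f(\vec y)$ takes infinitely many distinct values (indeed on $(F_+)^n$) with a controlled positive-exponent generator appearing. First I would observe that the distributivity identity $a(t(x_1,\dots,x_n))=t(a(x_1),\dots,a(x_n))$ for $a\in T$, $t\in W$ of arity $n\ge 2$, instantiated with $a=\nu_c^\A$ (left translation by $c$), is a very strong constraint: translating the output of $t$ on the left must equal applying $t$ to the translated inputs. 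For a function of form (\ref{form1}), $\nu_c(wy_{t^*}) = cwy_{t^*}$ whereas $t(cy_1,\dots,cy_n) = w(cy_{t^*})$; since $F$ is a free (non-abelian) group and $c$ is arbitrary, $cw = wc$ for all $c\in F$ forces $w=1$. So any binary (or higher) $t\in W$ of form (\ref{form1}) must in fact be the projection $\pi^n_{t^*}$.

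Next I would handle terms of form (\ref{form2}). Here $\bar t(\vec y) = f(\vec y)y_{t^*}$ with $f$ taking infinitely many values; the distributivity identity applied with $a=\nu_c^\A$ would give $cf(\vec y)y_{t^*} = f(c\vec y)(cy_{t^*})$, i.e. $f(c y_1,\dots,cy_n) = cf(\vec y)c^{-1}$ for all $c\in F$ and all $\vec y$. This is already suspicious but not immediately contradictory; the real leverage comes from combining it with the fact that $W$ must generate the binary operation $g^\A$ itself (since $g^\A$ is basic, hence in the clone). The heart of the argument will be: $g^\A(w_1,w_2) = h_{w_1w_2^{-1}}w_2$ is, by construction (via the conditions $(h1)$–$(h4)$ on $h$), genuinely non-distributive — for instance $\nu_{z_1z_3^{-1}}^\A\big(g^\A(z_1 w, z_2 w)\big)$ versus $g^\A\big(z_1z_3^{-1}z_1 w, z_1z_3^{-1}z_2 w\big)$ will disagree because $h$ is injective and the specified values $h_{z_1z_2^{-1}}=z_6$, $h_{z_3z_2^{-1}}=z_8$, $h_{z_1z_4^{-1}}=z_{10}$, $h_{z_1z_2^{-1}}$ are pairwise distinct elements of $E$ that are unrelated to the translating element. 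So $g^\A$ cannot itself lie in $W$; it must be built by composition from members of $W$ together with projections and the unary $\nu_c$. I would then trace through such a composition tree using Lemma~\ref{lm:structure}: the distributivity of every non-unary node of $W$, together with distributivity of the unary $\nu_c$ over $g$-free combinations, should force the whole composite to be distributive too, contradicting the non-distributivity of $g^\A$ just exhibited.

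More precisely, the induction I would run is: \emph{every} term function obtained by composing members of $W$ (of arity $\ge 2$, all distributive by hypothesis), projections, and the unary operations $\nu_c^\A$ satisfies the distributivity identity with respect to every $a\in T$. Unary operations $\nu_c^\A$ commute with left translation only up to conjugation, so I must be careful: the cleanest route is to show by induction that any such composite $F$ satisfies $a\circ F = F\circ(a,\dots,a)$ for $a = \nu_c^\A$; the base cases are projections (trivial) and $\nu_d^\A$ (where $\nu_c^\A\nu_d^\A(y) = cdy$ and $\nu_d^\A(\nu_c^\A y) = dcy$, so this fails unless one restricts to the constant subalgebra — meaning the unary generators should be excluded from the arity-$\ge 2$ distributivity and handled separately). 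This is the technical crux, and matches the remark after Definition~\ref{def:dist} that the natural generating set need not witness the condition; I would resolve it by noting that the clone of $\A$ is generated by $\{\nu_c^\A : c\in F\}\cup\{g^\A\}$, and that any generating set $W$ witnessing distributivity must, modulo unary operations, express $g^\A$ as a composite of distributive arity-$\ge 2$ operations, which one then shows is impossible by applying Lemma~\ref{lm:structure} to locate a ``spreading'' generator and deriving a contradiction with $f(c\vec y) = cf(\vec y)c^{-1}$ failing on the explicit witnesses from $(h1)$–$(h4)$.

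The step I expect to be the main obstacle is precisely this last reduction: controlling how $g^\A$ can be decomposed within an arbitrary distributivity-witnessing generating set. Lemma~\ref{lm:structure} gives the dichotomy for \emph{all} term functions, so the obstacle is organizational — showing that form (\ref{form1}) operations of arity $\ge 2$ in $W$ must be projections (easy, as above), and that form (\ref{form2}) operations in $W$ cannot satisfy the conjugation identity $f(c\vec y)=cf(\vec y)c^{-1}$ forced by distributivity while still, in combination, generating $g^\A$. I anticipate the cleanest contradiction comes by evaluating an expression for $g^\A$ on arguments of the form $(z_1 w, z_2 w)$, $(z_3 w, z_2 w)$, $(z_1 w, z_4 w)$ and using injectivity of $h$ together with $(h1)$–$(h3)$ to show the left-hand factor $h_{w_1 w_2^{-1}}$ depends on $w_1 w_2^{-1}$ in a way incompatible with being assembled from finitely many distributive operations whose ``spreading'' behaviour is governed by conjugation.
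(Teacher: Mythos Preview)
You set up the contradiction correctly and derive the right key identity: for any arity-$\geq 2$ operation $\bar t\in W$ of form~(\ref{form2}), distributivity with $a=\nu_c^\A$ gives $cf(\vec y)y_{t^*}=f(c\vec y)(cy_{t^*})$, i.e.\ $f(c\vec y)=cf(\vec y)c^{-1}$. But you then write ``this is already suspicious but not immediately contradictory'' and launch into a complicated programme of decomposing $g^\A$ through $W$. That programme is where you lose the thread, and it is unnecessary: the conjugation identity \emph{is} the contradiction, and the paper's proof ends essentially one line after the point where you turn away.

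Here is what you are missing. Lemma~\ref{lm:structure} tells you not only that $f$ exists, but that $f:F^n\to FG(E\cup C_t)$, and that there is a specific $\vec\mu_1\in (F_+)^n$ for which $z_{j_1}$ appears with positive exponent in the normal form of $f(\vec\mu_1)$; in particular $f(\vec\mu_1)\neq 1$. Now choose $c=a\in Z\setminus(E\cup C_t)$ (possible since $C_t$ is finite). Both $f(\vec\mu_1)$ and $f(a\vec\mu_1)$ lie in $FG(E\cup C_t)$, so neither contains $a$ in its normal form. But then $af(\vec\mu_1)a^{-1}$ has normal form beginning with $a$ and ending with $a^{-1}$ (no cancellation against $f(\vec\mu_1)$ is possible), and this cannot equal $f(a\vec\mu_1)\in FG(E\cup C_t)$ unless $f(\vec\mu_1)=1$ --- contradicting the presence of $z_{j_1}$. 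That is the whole proof.

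Your detour into showing that composites of distributive operations remain distributive, and worrying about how unary $\nu_d$ interact with the induction, is both unnecessary and (as you yourself flag) problematic, since unary members of $W$ carry no distributivity constraint and can appear anywhere in a term for $g^\A$. The paper sidesteps all of this: it never analyses how $g^\A$ is built from $W$, only uses the existence of $g^\A$ (depending on both arguments, via $(h1)$--$(h3)$) to guarantee that $W$ contains \emph{some} essentially non-unary operation, and then kills that single operation directly with the normal-form argument above.
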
  
\begin{proof}
By way of contradiction, assume that  $W$ is set of functions that generate the clone of $\A$ and witness the distributivity property. 
The clone of $\A$ contains the function $g^\A$. By the first three conditions of our choice of $h$, calculation shows that $g(z_1,z_2)=z_6z_2, g(z_3,z_2)=z_8z_2,$ and  
$g(z_1,z_4)=z_{10}z_4$. 
Combined, these results show that $g$  depends on both of its arguments. 

It follows that $W$ must contain an operation $v$ that depends on more than one argument, for otherwise the entire clone of $\A$ would consist of functions that are essentially unary. As $v$ is in the clone of $\A$, it  is a composition of $\A$-operations and projections, and it is easy to see that such $v$ must have the 
form $\bar t \circ \pi^n_m$ for some $t\in T$. 
Moreover, $W \setminus \{v\} \cup \{\bar t\}$ also generates the clone of $\A$ and witnesses the distributivity property. Thus, we may assume that $v=\bar t$ for a term $t \in T$; since $v$ depends on at least two variables, so does $\bar t$. 

 Such a $\bar t$ must satisfy one of the two conditions from Lemma \ref{lm:structure}. As the first condition implies that $\bar t$ only depends on one variable, we must have instead that $\bar t (\vec y)= f(\vec y) y_{t^*}$, where
 $f$ is as in Condition (\ref{form2}) of Lemma \ref{lm:structure}.  Let $(\vec \mu_i)_{i\in\mathbb{N}}$ and $(z_{j_i})_{i\in\mathbb{N}}$ be the sequences associated to $f$ 
 satisfying (a) and (b) of  Lemma \ref{lm:structure}, and set $w_1=f(\vec \mu_1)$.

 We have that $ \bar t(\vec \mu_1)= f(\vec \mu_1) \mu_1^*$ - where $\mu_1^*$ is the $t^*$-th entry of $\vec \mu_1$.  Choose $a \in Z \setminus (E \cup C_t)$. As we assume that $\A$ satisfies the distributivity property, and $W$ is a witness of it, we have that 
 $\nu_a(\bar{t}(\vec \mu_1))=\bar{t}(\nu_a(\vec \mu_1))$, where, with abuse of notation,
 $\nu_a(\vec \mu_1)=a\vec \mu_1$ is the element of $F^n$
 obtained by multiplying every coordinate of $\vec \mu_1$ by $a$ on the left. Hence $ af(\vec \mu_1) \mu_1^*= f(a \vec \mu_1) a \mu_1^*$ and so $  af(\vec \mu_1) = f(a \vec \mu_1) a .$
Now, $f(a \vec \mu_1)$ is in the image of $f$ which is contained in $FG(E \cup C_t)$ by Lemma \ref{lm:structure}. As $a \notin FG(E \cup C_t)$, the normal form of 
$f(a \vec \mu_1)a$, and hence the normal form of $af(\vec \mu_1)$, 
will end with $a$.  However, $f(\vec \mu_1)$ is also an element of $FG(E \cup C_t)$, and so its normal form does not contain $a$. It follows that $f(\vec \mu_1)=1$. However, by Lemma \ref{lm:structure}, the normal form of $f(\vec \mu_1)$ contains the generator $z_{j_1}$,  a contradiction.
\end{proof}

\section{Conditions for $\End{\B}$ to be a right order in $\End{\A}$}\label{sec:right}

{\color{black} Throughout this section, let $\B$ be a stable basis algebra of finite rank $n$
satisfying the distributivity condition. Thus far we  have not been explicit about the properties satisfied by $\B$, but for the convenience of the reader we give some brief details.
All of these, and further information, can be found in \cite{basisi}.
The relation $\prec$ is defined by 
\[a\prec W\Leftrightarrow a\in \langle \emptyset\rangle_{\mathbb{B}}
\mbox{ or } \langle \{ a\} \rangle _{\mathbb{B}}\cap \langle W\rangle_{\mathbb{B}}
\neq \langle \emptyset\rangle_{\mathbb{B}}\]
for $a\in B$ and $W\subseteq B$. We say that $W$ is {\em pure closed} if
\[a\prec W\Leftrightarrow a\in W.\]
Any subset of $V$ of $B$ is contained in smallest pure closed subset $\pc(V)$;
indeed, $\pc(V)$ is a subalgebra with a basis that can be extended to a basis
of $B$. To say $X$ is a  {\em basis} of a subalgebra $C$ means that 
$C=\langle X\rangle_{\mathbb{B}}$, every map from $X$ to $B$ lifts to a (unique) morphism
from $C$ to $B$, and $x \not\prec X\setminus \{ x\}$, for any $x\in X$.
The {\em rank} of a subalgebra $U$ is the cardinality of a basis of its pure closure
$\pc(U)$. Moreover, since $B$ has finite rank $n$, any subalgebra with rank $m\leq n$  itself has a basis of $m$ elements. However, it is only the pure closed subalgebras that have bases that are extendable to bases of $B$. Since  $\B$ has  rank $n$, we choose and fix a  basis $\{b_1,\dots,b_n\}$. 

As pointed out at the beginning
of Section~\ref{sec:example}, the monoid $\mathbb{T}$ of non-constant unary term operations is cancellative and left Ore. It therefore has a group of left quotients $\G$ by \cite[Theorem 1.24]{cp}. Further, the elements of $\mathbb{T}$ are injective (as maps from
$B$ to $B$). Another fact of which we make use is that if $u(x)$ is a unary term operation of $\B$, and $u(b)\in \langle \emptyset\rangle_{\mathbb{B}}$ for some
$b\notin \langle \emptyset\rangle_{\mathbb{B}}$, then $u(x)$ is a constant map with image
 $u(b)\in \langle \emptyset\rangle_{\mathbb{B}}$.

Finally, for any $\alpha\in \en \B$, the {\em rank} of $\alpha$ is defined to be the rank of the subalgebra $\im \alpha$; this always exists. }

In \cite{gouldI} Gould constructs an independence algebra $\A$ such that $\B$ is a reduct of $\A$, and such that $\en(\B)$ is a left order  (in the sense of
Definition~\ref{defi:order}) in $\en(\A)$. 
For convenience we gather here some essential information concerning the construction of
$\A$ from $\B$. Further details may be found in \cite{gouldI}.

Let 
$\Sigma=T\times B$ and define a relation $\sim$ on $\Sigma$ by the rule that
\[(a,c)\sim (b,d)\Leftrightarrow xa=yb\mbox{ and }x(c)=y(d)\mbox{ for some }x,y\in T;\]
{\color{black}for clarity, here $xa,yb$ are products in the {\em monoid} $\mathbb{T}$ and $x(c),y(d)$ are the values of $x,y$ acting on $c,d\in B$, respectively.}
Then $\sim$ is an equivalence relation and $A=\Sigma/\sim$ is the underlying set of an independence algebra $\A$. Further, $\B$ is  a reduct of $\A$, where $\B$ embeds in
$\A$ under $b\mapsto [1,b]:=[(1,b)]$.

\begin{prop}\label{prop:facts}\cite{gouldI} Let $\B$ and $\A$ be as above. Then 
$$\{[1,b_1],\dots, [1,b_n]\}$$ 
is a basis for $\A$ so that $\A$ and   $\B$  have the same rank. The map $\theta\mapsto \bar{\theta}$ from
$\en(\B)$ to $\en(\A)$ embeds $\en(\B)$ as a left order in
$\en(\A)$, where for any $[a,b]\in A$ we have
$[a,b]\bar{\theta}=[a,b\theta]$. Further, $\theta$ and $\bar{\theta}$ have the same rank.
\end{prop}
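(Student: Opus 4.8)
This proposition collects facts that are essentially already in \cite{gouldI}: the assertions about the basis of $\A$ and about $\rank\A=\rank\B$ amount to \cite[Theorem~4.14]{gouldI}, and the left order statement is \cite[Theorem~5.3]{gouldI}, so the genuinely new content to verify is that $\theta\mapsto\bar\theta$ is a homomorphic embedding and that $\rank\theta=\rank\bar\theta$. The plan for the basis is first to check that $\{[1,b_1],\dots,[1,b_n]\}$ generates $\A$: since $\B$ is relatively free on $\{b_1,\dots,b_n\}$, an arbitrary $[a,b]\in A$ has $b=w(b_1,\dots,b_n)$ for some term $w$ of $\B$, and, using the description of the operations of $\A$ from \cite{gouldI} (the non-constant unary term operations of $\A$ are exactly the elements of $\G$ acting, and the $n$-ary basic operations of $\A$ extend those of $\B$ via a common-denominator formula made legitimate by the distributivity condition), one gets $[a,b]=w\bigl(\nu_{a^{-1}}[1,b_1],\dots,\nu_{a^{-1}}[1,b_n]\bigr)$. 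For independence I would argue by contradiction: if $[1,b_k]\in\langle\,[1,b_j]:j\neq k\,\rangle_\A$, then, using the distributivity condition to push the $\G$-part of every $\A$-term onto the variables, the dependence takes the shape $[1,b_k]=f\bigl(\nu_{g_1}[1,b_{j_1}],\dots,\nu_{g_s}[1,b_{j_s}]\bigr)$ with $f$ a term operation of $\B$, each $g_l\in\G$ and each $j_l\neq k$; placing these arguments over a common left denominator $a\in\T$ and unwinding the relation $\sim$ then forces $a(b_k)\in\langle\,b_j:j\neq k\,\rangle_\B$ for a non-constant $a\in\T$. Since $a$ is non-constant, $a(b_k)\notin\langle\emptyset\rangle_\B$, while $a(b_k)\in\langle\{b_k\}\rangle_\B$; hence $b_k\prec\{b_j:j\neq k\}$, contradicting that $\{b_1,\dots,b_n\}$ is a basis of $\B$. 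This yields that $\{[1,b_1],\dots,[1,b_n]\}$ is a basis of $\A$ and $\rank\A=n=\rank\B$.

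For the embedding: $[a,b]\bar\theta:=[a,b\theta]$ is well defined because endomorphisms commute with unary term operations, so $xa=yb$ and $x(c)=y(d)$ give $x(c\theta)=(x(c))\theta=(y(d))\theta=y(d\theta)$; that $\bar\theta\in\en(\A)$ follows by checking the basic operations of $\A$ via their common-denominator description and, again, the fact that $\theta$ commutes with term operations; $\theta\mapsto\bar\theta$ is plainly a monoid homomorphism, and it is injective since $[1,b\theta]=[1,b\theta']$ forces $x(b\theta)=x(b\theta')$ for an injective $x\in\T$, hence $b\theta=b\theta'$. For the left order property I would, given $\phi\in\en(\A)$, write $[1,b_i]\phi=[a_i,d_i]$, choose by the left Ore property of $\T$ a common left multiple $a=x_ia_i$ with $x_i,a\in\T$, and take $\alpha,\beta\in\en(\B)$ to be the endomorphisms with $b_i\alpha=a(b_i)$ and $b_i\beta=x_i(d_i)$. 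Since $\nu_{a^{-1}}[1,a(b_i)]=[1,b_i]$, the set $\{[1,b_i]\bar\alpha\}=\{[1,a(b_i)]\}$ generates $\A$, so $\bar\alpha$ is a surjective endomorphism of the finite-rank independence algebra $\A$ and hence an automorphism, whence $\bar\alpha\s=\bar\alpha^{-1}$. Using $\nu_{a_i}[a_i,d_i]=[1,d_i]$, the identity $a=x_ia_i$, and the fact that $\phi$ commutes with the term operation $\nu_a$, one computes
\[ [1,b_i]\,\bar\alpha\phi=\bigl(\nu_a[1,b_i]\bigr)\phi=\nu_a\bigl([1,b_i]\phi\bigr)=\nu_a[a_i,d_i]=[1,x_i(d_i)]=[1,b_i]\,\bar\beta, \]
so $\bar\alpha\phi$ and $\bar\beta$ agree on the basis $\{[1,b_i]\}$ of $\A$ and therefore $\bar\alpha\phi=\bar\beta$, i.e.\ $\phi=\bar\alpha\s\bar\beta$ with $\alpha,\beta\in\en(\B)$.

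It then remains to know that every square-cancellable element of $\en(\B)$ lies in a subgroup of $\en(\A)$, which I would take from \cite[Theorem~5.3]{gouldI}: the idea there is that square-cancellability of $\gamma$ gives $\rank\gamma=\rank\gamma^2$, hence $\rank\bar\gamma=\rank\bar\gamma^2$ once one knows $\rank\bar\gamma=\rank\gamma$, and then, $\en(\A)$ being a regular semigroup with well-behaved Green's structure, $\bar\gamma$ and $\bar\gamma^2$ lie in the same $\eh$-class, so $\bar\gamma$ is in a subgroup. Finally, to see $\rank\theta=\rank\bar\theta$ I would fix a basis $\{c_1,\dots,c_m\}$ of the subalgebra $\im\theta$ of $\B$, so $m=\rank\theta$: writing $c_j=b_j'\theta$ gives $[1,c_j]=[1,b_j']\bar\theta\in\im\bar\theta$, while conversely each element of $\im\bar\theta$ has the form $[a,b\theta]=\nu_{a^{-1}}[1,b\theta]$ with $b\theta\in\langle c_1,\dots,c_m\rangle_\B$, so lies in $\langle\,[1,c_1],\dots,[1,c_m]\,\rangle_\A$; hence $\im\bar\theta=\langle\,[1,c_1],\dots,[1,c_m]\,\rangle_\A$. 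The independence argument above, applied to the independent set $\{c_1,\dots,c_m\}$ of $\B$, shows $\{[1,c_1],\dots,[1,c_m]\}$ is independent in $\A$, so it is a basis of $\im\bar\theta$, giving $\rank\bar\theta=m=\rank\theta$.

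I expect the main obstacle to be the ``clear the denominators'' passage in the two independence arguments: it is precisely here that the distributivity condition is indispensable (it is what lets an arbitrary term operation of $\A$ be rewritten with its $\G$-part on the variables), and some care with $\langle\emptyset\rangle_\B$ is needed when $\B$ has constants. The other genuinely technical ingredient is the square-cancellability clause of the left order property, which is the real substance of \cite[Theorem~5.3]{gouldI}; I would cite it rather than reprove it.
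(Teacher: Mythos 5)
Your proposal is correct, and since the paper states Proposition~\ref{prop:facts} only as a citation of \cite{gouldI} (Theorems 4.14 and 5.3 there), there is no in-paper proof to diverge from: your reconstruction follows exactly the cited construction (common left denominators via the left Ore property, the distributivity condition to push the $\G$-parts onto variables, well-definedness of $\bar\theta$ from commuting with unary term operations, and surjectivity of a rank-$n$ endomorphism of $\A$ forcing it to be a unit, the same device the paper reuses in Theorem~\ref{ro} via \cite[Proposition 3.2]{gould}). The points you flag as delicate (clearing denominators in the independence arguments, care with $\langle\emptyset\rangle_{\mathbb{B}}$, and citing \cite[Theorem 5.3]{gouldI} for the square-cancellability clause) are precisely where the substance of the cited proofs lies, so deferring to \cite{gouldI} there is appropriate.
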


Before addressing the main question of this section, we first confirm the way in which 
$\End (\B)$ sits inside $\End(\A)$ in the general case. Recall from \cite{gould ab}
that a left order $\mathbb{S}$ in $\mathbb{Q}$ is {\em fully stratified} if for any
$a,b\in S$ we have
\[a\,\leqars\, b\mbox{ in }\mathbb{S}\mbox{ if and only if }a\,\leqar\, b\mbox{ in }\mathbb{Q}\]
and
\[a\,\leqels\, b\mbox{ in }\mathbb{S}\mbox{ if and only if }a\,\leqel\, b\mbox{ in }\mathbb{Q}.\]
The relations $\leqar$ and $\leqel$ above are the pre-orders associated with Green's relations $\ar$ and $\el$;  $\leqars$ and $\leqels$ above are the pre-orders associated with the larger relations $\ars$ and $\els$. We give further details as and when we use them; the reader may consult \cite{gould ab}.  These relations are important for the study of
$\en(\B)$ due to the following result.

\begin{prop}\label{prop:greens} \cite{basisii,gould} Let $\mathbb{C}$ be an independence algebra and
$\mathbb{D}$ a basis algebra, let $\alpha,\beta\in \en(\mathbb{C})$ and
$\gamma,\delta\in \en(\mathbb{D})$. Then  

\[\alpha\,\leqar\, \beta\mbox{  if and only if }\ker\beta\subseteq \ker \alpha;\]
\[\alpha\,\leqel\, \beta\mbox{  if and only if }\im\alpha\subseteq \im\beta;\]
\[\gamma\,\leqars\, \delta\mbox{  if and only if }\ker\delta\subseteq\ker\gamma\]
and
\[\gamma\,\leqels\, \delta\mbox{ if and only if }\pc(\im\gamma)\subseteq\pc(\im\delta).\]
\end{prop}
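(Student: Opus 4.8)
The plan is to deduce all four equivalences from the known structure of endomorphism monoids of independence algebras and basis algebras, treating the first two (the independence-algebra statements) and the last two (the basis-algebra statements) in parallel, since $\mathcal{R}^\ast$ and $\mathcal{L}^\ast$ reduce to $\mathcal{R}$ and $\mathcal{L}$ when computed over a regular monoid. For the $\mathcal{R}$-statement on $\en(\mathbb{C})$: one direction is immediate, since $\alpha\leqar\beta$ gives $\alpha=\beta\mu$ for some $\mu$ (or $\alpha=\beta$), and then $\ker\beta\subseteq\ker\alpha$ follows because $\mu$ is a function. For the converse, given $\ker\beta\subseteq\ker\alpha$, one uses the fact that $\mathbb{C}$ is relatively free with the exchange property: the map sending $b\beta\mapsto b\alpha$ is well defined on $\im\beta$, extends along a basis of $\mathbb{C}$ (extending a basis of $\pc(\im\beta)$, or using that $\mathbb{C}$ is a $v^\ast$-algebra so every subalgebra is a retract) to an endomorphism $\mu$ with $\beta\mu=\alpha$, whence $\alpha\leqar\beta$. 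The $\mathcal{L}$-statement is dual: $\alpha\leqel\beta$ iff $\alpha=\mu\beta$ for some $\mu$, and clearly this forces $\im\alpha\subseteq\im\beta$; conversely, if $\im\alpha\subseteq\im\beta$, choose for each basis element $b_i$ of $\mathbb{C}$ a preimage under $\beta$ of $b_i\alpha$ and extend linearly to get $\mu$ with $\mu\beta=\alpha$.

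For the basis-algebra statements, the strategy is to quote Proposition~\ref{prop:facts} and the construction of $\mathbb{A}$ from $\mathbb{D}$: since $\en(\mathbb{D})$ embeds in the regular monoid $\en(\mathbb{A})$ as a left order, square-cancellability and the $\ast$-Green's relations on $\en(\mathbb{D})$ can be computed by intersecting the corresponding Green's relations of $\en(\mathbb{A})$ back with $\en(\mathbb{D})$; this is a standard fact about orders (from \cite{fountaingouldorders}), namely that for $\gamma,\delta\in S$ a left order in $Q$, $\gamma\leqars\delta$ in $S$ iff $\gamma\leqar\delta$ in $Q$, and similarly for $\mathcal{L}^\ast$ versus $\mathcal{L}$. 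So for $\mathcal{R}^\ast$: $\gamma\leqars\delta$ in $\en(\mathbb{D})$ iff $\bar\gamma\leqar\bar\delta$ in $\en(\mathbb{A})$ iff (by the already-proved independence-algebra case) $\ker\bar\delta\subseteq\ker\bar\gamma$; and one checks that $\ker\bar\theta\cap(B\times B)=\ker\theta$ under the embedding $b\mapsto[1,b]$, so this is equivalent to $\ker\delta\subseteq\ker\gamma$. For $\mathcal{L}^\ast$: $\gamma\leqels\delta$ iff $\bar\gamma\leqel\bar\delta$ iff $\im\bar\gamma\subseteq\im\bar\delta$; here one must translate this back, and the point is that $\im\bar\theta$ meets $B$ exactly in $\langle\im\theta\rangle_{\mathbb{A}}\cap B=\pc(\im\theta)$ — because the pure closure in $\mathbb{D}$ is precisely the trace on $B$ of the subalgebra generated in $\mathbb{A}$, which is how $\mathbb{A}$ was built to ``divide out'' by $\mathbb{T}$. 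Hence $\im\bar\gamma\subseteq\im\bar\delta$ is equivalent to $\pc(\im\gamma)\subseteq\pc(\im\delta)$.

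The main obstacle, I expect, is the $\mathcal{L}^\ast$ identification for basis algebras: verifying cleanly that $\pc(\im\theta)$ is exactly the ``trace'' of $\im\bar\theta$ on $B$, and that $\leqels$ in $\en(\mathbb{D})$ coincides with the restriction of $\leqel$ in $\en(\mathbb{A})$. This requires knowing the precise relationship between the pure-closure operator $\pc$ on $\mathbb{D}$ and the subalgebra-generation operator $\langle-\rangle$ on $\mathbb{A}$, which is the heart of the construction in \cite{gouldI}; the $\mathcal{R}$-side is comparatively routine because kernels behave simply under the embedding. An alternative, if one wants to avoid leaning on the order machinery, is to prove all four statements directly and intrinsically: the $\mathcal{R}^\ast$ case via the characterisation of $\leqars$ by the existence of a common ``partial retraction'', and the $\mathcal{L}^\ast$ case by exploiting that a basis of a pure-closed subalgebra extends to a basis of $B$, so that $\pc(\im\gamma)\subseteq\pc(\im\delta)$ lets one build the required $\mu,\nu$ with $\mu\gamma=\nu\delta$ on a basis. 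Either route works; I would present the first, citing \cite{basisii,gould} for the existing computations and only filling in the translation via the embedding of Proposition~\ref{prop:facts}.
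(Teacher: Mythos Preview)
The paper does not prove this proposition at all: it is stated with the citation \cite{basisii,gould} and used as a black box. So there is no proof in the paper to compare against; the intended ``proof'' is simply a reference to the literature, where the characterisations of $\leqar,\leqel$ on $\en(\mathbb{C})$ and of $\leqars,\leqels$ on $\en(\mathbb{D})$ are established directly and intrinsically.

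That said, your primary route for the basis-algebra half has genuine problems. First, the construction of the independence algebra $\A$ from $\mathbb{D}$ in \cite{gouldI} (and hence Proposition~\ref{prop:facts}) \emph{requires the distributivity condition}, whereas Proposition~\ref{prop:greens} is stated for an arbitrary basis algebra $\mathbb{D}$; so you cannot invoke that embedding in general. Second, and more seriously, the implication you call ``a standard fact about orders'' --- that $\gamma\leqars\delta$ in $S$ iff $\gamma\leqar\delta$ in $Q$ whenever $S$ is a left order in $Q$ --- is \emph{not} automatic. One direction ($\leqar$ in $Q$ implies $\leqars$ in $S$) is easy, but the converse is precisely the content of being \emph{fully stratified}, and in this very paper that property (Theorem~\ref{rightord}, the fully-stratified statement) is \emph{proved using} Proposition~\ref{prop:greens}. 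So your argument is circular as written. The same circularity afflicts the $\els$ half: you would be assuming the conclusion of \cite[Proposition~5.2]{gouldI}, which again rests on the intrinsic characterisation you are trying to establish.

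Your fallback ``intrinsic'' sketch at the end is the right idea and is in fact what \cite{basisii,gould} do: for $\leqars$ one uses the definition of $\ars$ via cancellation and the fact that in a basis algebra one can extend partial maps off pure-closed subalgebras; for $\leqels$ one uses that a basis of $\pc(\im\gamma)$ extends to a basis of $\mathbb{D}$, together with the defining property of $\pc$. If you are going to present a proof, present that one and drop the order-theoretic detour entirely.
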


\begin{theorem}\label{rightord}  Let $\B$ be a stable basis algebra of finite rank $n\geq 1$ such that $\B$ satisfies the distributivity condition. The monoid $\End(\B)$ is  a 
fully stratified left order  in $\End(\A)$. 
\end{theorem}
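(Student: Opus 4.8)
The plan is to verify directly the two biconditionals in the definition of "fully stratified left order" using the characterizations of the Green-type pre-orders supplied by Proposition~\ref{prop:greens}, together with the explicit description of the embedding $\theta\mapsto\bar\theta$ from Proposition~\ref{prop:facts}. Since Proposition~\ref{prop:facts} already tells us $\End(\B)$ is a left order in $\End(\A)$, the only new content is the stratification, i.e. that $\leqars$ in $\End(\B)$ agrees with $\leqar$ in $\End(\A)$, and that $\leqels$ in $\End(\B)$ agrees with $\leqel$ in $\End(\A)$, both read off along the embedding. (The case $n\geq 1$ includes the degenerate small-rank situations, which should cause no trouble since the characterizations are uniform.)

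First I would handle the $\R$-side. By Proposition~\ref{prop:greens}, for $\theta,\phi\in\End(\B)$ we have $\theta\leqars\phi$ in $\End(\B)$ iff $\ker\phi\subseteq\ker\theta$ (as kernels on $B$), while $\bar\theta\leqar\bar\phi$ in $\End(\A)$ iff $\ker\bar\phi\subseteq\ker\bar\theta$ (as kernels on $A$). So the task reduces to showing that $\ker\phi\subseteq\ker\theta$ on $B$ if and only if $\ker\bar\phi\subseteq\ker\bar\theta$ on $A$. One direction is immediate because $\B$ embeds in $\A$ via $b\mapsto[1,b]$ and $\bar\theta$ restricts to $\theta$ on this copy, so $\ker\bar\theta\cap(B\times B)=\ker\theta$; hence containment on $A$ forces containment on $B$. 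For the converse I would use the formula $[a,b]\bar\theta=[a,b\theta]$: if $[a,b]\,\ker\bar\phi\,[a',b']$, then by the definition of $\sim$ there are $x,y,x',y'\in T$ witnessing $[a,b\phi]=[a',b'\phi]$; applying the $\sim$-relation and the fact that elements of $\mathbb{T}$ are injective, I can pull this back to a statement of the form $u(b)\,\ker\phi\,v(b')$-ish inside $B$ for suitable $u,v\in T$, then use $\ker\phi\subseteq\ker\theta$ and run the computation forward again to conclude $[a,b\theta]=[a',b'\theta]$, i.e. $[a,b]\,\ker\bar\theta\,[a',b']$. The bookkeeping with the $T$-multipliers and the equivalence relation $\sim$ is the fiddly part here; the key leverage is injectivity of the elements of $\mathbb{T}$ and left reversibility, both recalled in the section preamble.

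Next the $\el$-side. By Proposition~\ref{prop:greens}, $\theta\leqels\phi$ in $\End(\B)$ iff $\pc(\im\theta)\subseteq\pc(\im\phi)$, whereas $\bar\theta\leqel\bar\phi$ in $\End(\A)$ iff $\im\bar\theta\subseteq\im\bar\phi$. So I must show $\pc(\im\theta)\subseteq\pc(\im\phi)$ in $\B$ iff $\im\bar\theta\subseteq\im\bar\phi$ in $\A$. Here the natural tool is the behaviour of pure closure under passage to $\A$: an element $[a,b]\in A$ has a $\mathbb{T}$-multiple $u(b)=x b$ (for appropriate $u\in T$) landing in the copy of $B$, and $[a,b]\in\im\bar\phi$ should be equivalent, via the construction of $\A$ and the left Ore property, to some $\mathbb{T}$-translate of $b$ lying in $\im\phi$, i.e. to $b\in\pc(\im\phi)$-type membership once one accounts for how $\mathbb{T}$ acts. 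Concretely: $\im\bar\phi$ is the set of $[a,b\phi]$ as $[a,b]$ ranges over $A$, and I would show this set equals the "$\mathbb{T}$-saturation" of $\im\phi$ inside $A$, which is exactly $\pc$ of $\im\phi$ transported to $\A$ (using that $\A$ is an independence algebra and that, by Proposition~\ref{prop:facts}, pure closure in $\B$ becomes ordinary subalgebra-generation in $\A$). Then the claimed equivalence of the two containments is formal. I expect this $\el$-side, and specifically the identification of $\pc(\im\phi)$ in $\B$ with $\im\bar\phi\cap B$ (or the appropriate statement relating $\pc$ in $\B$ to genuine images in $\A$), to be the main obstacle: it is where the specific interplay between the pure-closure operator on the basis algebra and the construction of the independence algebra must be used carefully. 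Once both biconditionals are established, together with the left-order conclusion already recorded in Proposition~\ref{prop:facts}, the algebra $\End(\B)$ is by definition a fully stratified left order in $\End(\A)$, completing the proof.
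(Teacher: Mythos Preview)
Your proposal is correct and follows essentially the same approach as the paper. Your $\ar$-side argument is exactly what the paper does (pass from $[a,u]\bar\beta=[b,v]\bar\beta$ to witnesses $c,d\in T$ with $ca=db$ and $(c(u))\beta=(d(v))\beta$, apply $\ker\beta\subseteq\ker\alpha$, and run it back), and for the $\el$-side the paper simply cites that the argument is inherent in the proof of \cite[Proposition~5.2(ii)]{gouldI} rather than spelling out the $\pc(\im\phi)$-versus-$\im\bar\phi$ comparison you sketch.
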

\begin{proof} We are required to show that for any $\alpha,\beta\in \End(\B)$ we have
that $\alpha\,\leqels\,\beta$ in $\End(\B)$ if and only if $\overline{\alpha}\,\leqel\,\overline{\beta}$ in $\End(\A)$
and dually, $\alpha\,\leqars\,\beta$ in $\End(\B)$ if and only if $\overline{\alpha}\,\leqar\,\overline{\beta}$ in $\End(\A)$.

The proof of the first statement is inherent in  the proof of \cite[Proposition 5.2 (ii)]{gould}, although not explicitly stated. We concentrate on the second; according to Proposition~\ref{prop:greens}  it is sufficient to show  that if 
$\ker \beta \subseteq \ker \alpha$, then 
$\ker\overline{\beta}\subseteq \ker\overline{\alpha}$.

Suppose therefore that $\ker \beta \subseteq \ker \alpha$, and consider
$[a,u],[b,v]\in A$ with $[a,u]\overline{\beta}=[b,v]\overline{\beta}$.
{\color{black} By Proposition \ref{prop:facts}} we have that
$[a,u\beta]=[b,v\beta]$ so that by the definition of $\sim$, there exist
$c,d\in T$ with $ca=db$ and $c(u\beta)=d(v\beta)$ so that
$(c(u))\beta=(d(v))\beta$. Since $\ker\beta\subseteq \ker\alpha$ 
we have $c(u\alpha)=(c(u))\alpha=({d}(v))\alpha={d}(v\alpha)$ and then
$[a,u]\overline{\alpha}=[a,u\alpha]=[b,v\alpha]=[b,v]\overline{\alpha}$,
as required.
\end{proof}

We have remarked that any $a\in T$ is one-one as a map, and certainly
\[a|_{\langle \emptyset\rangle_{\mathbb{B}}}:\langle \emptyset\rangle_{\mathbb{B}}\rightarrow \langle \emptyset\rangle_{\mathbb{B}}.\]
 
\begin{defi}\label{defn:CI}  The algebra $\B$ satisfies the {\em
Constant Isomorphism Property} (CI) if: 
\[a|_{\langle \emptyset\rangle_{\mathbb{B}}}:\langle \emptyset\rangle_{\mathbb{B}}\rightarrow \langle \emptyset\rangle_{\mathbb{B}}\]
is {\em onto}, hence an  {\em isomorphism} of the constant subalgebra $ \langle \emptyset\rangle_{\mathbb{B}}$ of $\mathbb{B}$.
\end{defi}

We introduced (CI) for the following purpose.

\begin{theorem}\cite[Theorem 6.2]{gouldI}\label{thm:straight} Let $\B$ be a stable basis algebra of finite rank satisfying the distributivity condition. Then $\en (\B)$ is a straight left order in $\en(\A)$ if and only if $\B$ satisfies the (CI) and (if $n\geq 2$), $\mathbb{T}$ is right Ore.
\end{theorem}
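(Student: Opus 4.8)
The plan is to prove both implications, after first reformulating ``straight left order'' in concrete terms. Since $\A$ is an independence algebra, $\en(\A)$ is regular, so two of its elements generate the same principal right ideal precisely when they are $\ar$-related, and by Proposition~\ref{prop:greens} this means they have the same kernel. As $\en(\B)$ is already a left order in $\en(\A)$ by Proposition~\ref{prop:facts}, the assertion that it is a \emph{straight} left order becomes: for every $\phi\in\en(\A)$ there is an $\alpha\in\en(\B)$ such that $\overline{\alpha}$ lies in a subgroup of $\en(\A)$, $\ker\overline{\alpha}\subseteq\ker\phi$, the element $\overline{\alpha}\phi$ lies in the embedded copy of $\en(\B)$, and $\overline{\alpha}\,\ar\,\overline{\alpha}\phi$ in $\en(\A)$; then, writing $e=\overline{\alpha}\s\overline{\alpha}$, one gets $\phi=\overline{\alpha}\s(\overline{\alpha}\phi)$, and the $\ar$-condition is precisely $\ker(\overline{\alpha}\phi)\subseteq\ker\overline{\alpha}$, i.e.\ that $\phi$ is injective on $\im\overline{\alpha}$. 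I would freely use the standing facts recalled just before Proposition~\ref{prop:facts}: $\mathbb{T}$ is cancellative and left Ore with group of left quotients $\G$; the elements of $\mathbb{T}$ are injective; if $u\in\mathbb{T}$ and $u(b)\in\langle\emptyset\rangle_{\mathbb{B}}$ with $b\notin\langle\emptyset\rangle_{\mathbb{B}}$ then $u$ is constant; and $A$ is an expansion of $B$ in which the elements of $\mathbb{T}$ become invertible, with $[a,b]$ playing the role of $a^{-1}(b)$ and every endomorphism of $\A$ commuting with the induced $\G$-action.

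To prove that (CI) together with right Ore implies straightness, take $\phi\in\en(\A)$ and write $[1,b_i]\phi=[a_i,c_i]$ with $a_i\in T$, $c_i\in B$. If $n=1$ there is only one ``denominator'', and no Ore condition beyond the always-available left Ore is needed; so suppose $n\geq2$. I would then use right Ore to replace the family $(a_i)$ by a single common denominator $a\in T$; the passage from $n=1$ to $n\geq2$ is exactly what forces one to go beyond left Ore to right Ore here. Condition (CI) takes care of the constant part: each $a\in T$ restricts to a bijection of $\langle\emptyset\rangle_{\mathbb{B}}$, so that $\langle\emptyset\rangle_{\mathbb{A}}$ collapses onto $\langle\emptyset\rangle_{\mathbb{B}}$ and causes no obstruction. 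Using $a$ I would define $\alpha\in\en(\B)$ on the fixed basis $\{b_1,\dots,b_n\}$ so that $\overline{\alpha}$ lies in a subgroup of $\en(\A)$, has $\ker\overline{\alpha}\subseteq\ker\phi$, and satisfies $[1,b_i](\overline{\alpha}\phi)\in B$ for each $i$; commutation with the $\G$-action then forces $\overline{\alpha}\phi=\overline{\beta}$ for a unique $\beta\in\en(\B)$. Finally I would check $\ker(\overline{\alpha}\phi)\subseteq\ker\overline{\alpha}$ (using injectivity of the elements of $\mathbb{T}$ together with (CI)), giving the straight factorisation $\phi=\overline{\alpha}\s\overline{\beta}$.

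For the converse, assume $\en(\B)$ is a straight left order and exhibit, for each required condition, an endomorphism of $\A$ forcing it. For (CI): if some $a\in T$ does not carry $\langle\emptyset\rangle_{\mathbb{B}}$ onto itself, choose a constant $c_0$ of $\B$ not of the form $a(c_0')$ with $c_0'$ a constant; then $[a,c_0]\in\langle\emptyset\rangle_{\mathbb{A}}$ but $[a,c_0]\notin B$, because $[a,c_0]=[1,c']$ would give $a(c')=c_0$, and the standing facts would force $c'$ to be a constant, contrary to the choice of $a$. I would build a $\phi\in\en(\A)$ of rank zero (or small rank) out of $[a,c_0]$ and show that any straight factorisation $\phi=\overline{\alpha}\s\overline{\beta}$ would, upon unwinding and using commutation with the $\G$-action, make $a$ invertible on $\langle\emptyset\rangle_{\mathbb{B}}$, a contradiction. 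For right Ore when $n\geq2$: given $a,b\in T$, define $\phi\in\en(\A)$ by $[1,b_1]\mapsto[a,b_1]$, $[1,b_2]\mapsto[b,b_2]$, and $[1,b_i]\mapsto[1,b_i]$ for $i\geq3$ (this extends uniquely, $\{[1,b_i]\}$ being a basis of $\A$ by Proposition~\ref{prop:facts}). A straight factorisation $\phi=\overline{\alpha}\s\overline{\beta}$ with $\ker\overline{\alpha}=\ker\overline{\beta}$ forces $\overline{\beta}=\overline{\alpha}\phi$; comparing the actions on $[1,b_1]$ and $[1,b_2]$ and using that both outputs must lie in $B$ then yields $u,v\in T$ with $au=bv$, which is right Ore.

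The main obstacle is the forward construction in the first implication: turning the abstract left-order factorisation of a general $\phi$ into one with matching kernels amounts to clearing the denominators of $\phi$ \emph{simultaneously} --- across all $n$ basis images (this is where $n\geq2$ genuinely requires right Ore, and where $n=1$ is easy) and on the constant subalgebra (this is where (CI) is genuinely needed) --- while keeping the premultiplier $\overline{\alpha}$ inside a subgroup with $\ker\overline{\alpha}\subseteq\ker\phi$ and with $\phi$ injective on $\im\overline{\alpha}$. Pinning down the exact point at which right Ore enters, and tracking the interplay of the representatives $[a,b]$, the $\G$-action, the group-inverse operation, and the embedding $\theta\mapsto\overline{\theta}$, is the delicate part; the rank-one case should be treated separately. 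A smaller but essential point to verify explicitly is the equivalence ``generate the same principal right ideal $\iff$ equal kernels'' in the regular monoid $\en(\A)$, on which the reformulation in the first paragraph depends.
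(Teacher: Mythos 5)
The statement you are proving is quoted in the paper from \cite[Theorem 6.2]{gouldI} and is not proved there, so the only in-paper material to measure your plan against is the closely parallel machinery for the right-order version (Lemma~\ref{minilemma}, Theorem~\ref{ro}, Lemma~\ref{CILemma}, Theorem~\ref{lr}). Your opening reformulation of straightness (via $\overline{\alpha}\phi=\overline{\beta}$, equal kernels, and Proposition~\ref{prop:greens}) is sound, but the sufficiency direction never gets past a plan: the genuine work is to write each image $c_i$ as a term in the fixed basis and to push a \emph{single} denominator through that term, and that is exactly where the distributivity condition, (CI) and right Ore operate (compare Lemma~\ref{minilemma}, where right Ore is used to rewrite $a^{-1}a_i$ as right fractions $p_iu^{-1}$ for unary operations nested inside the terms, distributivity handles the $\ell$-ary operations, and (CI) handles the constants). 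Your stated location for right Ore --- replacing the family $(a_i)$ by a common denominator across the $n$ basis images --- is misplaced: a common left multiple $\mu=\nu_i\mu_i$ of finitely many elements of $T$ needs only left Ore, which always holds; this is precisely the common-denominator step already used in the proof of Theorem~\ref{ro}. So the point at which $n\geq 2$ forces right Ore is not identified, and the construction of $\alpha$ with $\ker\overline{\alpha}\subseteq\ker\phi$, $\phi$ injective on $\im\overline{\alpha}$, and $\overline{\alpha}\phi$ in the copy of $\en(\B)$ is not actually produced.

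The necessity arguments contain concrete failures. For right Ore, your test map $\phi$ with $[1,b_1]\mapsto[a,b_1]$, $[1,b_2]\mapsto[b,b_2]$ and $[1,b_i]\mapsto[1,b_i]$ otherwise \emph{always} admits a straight factorisation, whatever $\mathbb{T}$ is: take $\alpha\in\en(\B)$ with $b_1\alpha=a(b_1)$, $b_2\alpha=b(b_2)$, $b_i\alpha=b_i$; then $\overline{\alpha}\in\Aut(\A)$ and $\overline{\alpha}\phi$ is the identity, so $\phi=\overline{\alpha}^{-1}\,\overline{\mathrm{id}}$ with equal (trivial) kernels, and no relation $au=bv$ can be extracted. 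This is why the paper's Theorem~\ref{lr} instead uses a rank-one map $[1,b_i]\mapsto[p,q(b_1)]$, mixing $p$ and $q$ inside one image value, so that clearing the denominator forces $q(b_1\beta)=p(b_i\gamma)$ inside a rank-one pure closure generated by a basis element, whence $qh=pk$. The (CI) direction is likewise not carried through: in the straight \emph{left} order setting the usable identity is $\overline{\beta}=\overline{\alpha}\phi$, a precomposition, so $[1,b_i]\overline{\beta}=[1,b_i\alpha]\phi$ and a rank-zero (or small-rank) test map only yields that constants of the form $s^{\B}(c,\dots,c)$, or relations $x(c)=y(w)$ with $x,y\in T$ and $w$ a constant, lie in the image of $a$ --- not that $a$ maps $\langle\emptyset\rangle_{\mathbb{B}}$ onto itself; the clean argument of Lemma~\ref{CILemma} uses postcomposition together with the fact that endomorphisms fix constants, which is not directly available here. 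As written, therefore, neither implication is established.
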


From \cite[Theorem 1.24]{cp}, if $\mathbb{T}$ is right Ore, then it has a group of right quotients and it is easy to see in this case that $\G$ is a group of (two-sided) quotients
of $\mathbb{T}$. The paper \cite{gouldI} leaves open the question of whether
the fact that $\G$ is a group of quotients of $\mathbb{T}$ forces $\End(\B)$ to be a
right and hence (two-sided) order in $\End(\A)$. 
 {\color{black} The aim of this section is to determine the conditions  under which $\End(\B)$ is a right order in $\End(\A)$, thus answering the question posed in the positive for $n\geq 2$ and establishing {\bf Result 2}.}
 
If the rank $n$ of $\B$  is $0$, then $\A=\B=\langle \emptyset \rangle$ 
and so  $\End(\A)=\{I_A\}=\End(\B)$ is a one element group, and our results hold trivially.
 We will therefore restrict to $\B$ with positive rank. 
Via a series of lemmas we now prove:

\begin{theorem}\label{rightord}  Let $\B$ be a stable basis algebra of finite rank $n\geq 1$ such that $\B$ satisfies the distributivity condition. The monoid $\End(\B)$ is  a right order in $\End(\A)$ if and only if $\mathbb{T}$ is right Ore and $\B$ satisfies (CI).
\end{theorem}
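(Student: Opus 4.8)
The plan is to prove the two directions separately, leveraging the earlier results as much as possible. For the ``if'' direction, suppose $\mathbb{T}$ is right Ore and $\B$ satisfies (CI). By Theorem~\ref{thm:straight}, $\en(\B)$ is then a \emph{straight} left order in $\en(\A)$. Since $\mathbb{T}$ is both left Ore (always) and right Ore, its group of left quotients $\G$ is also a group of right quotients, so $\G$ is a group of two-sided quotients of $\mathbb{T}$. The task is to convert a straight left-order factorisation into a right-order factorisation: given $\phi\in\en(\A)$, I must write $\phi=\bar\beta\,\bar\alpha\s$ with $\alpha,\beta\in\en(\B)$. The natural route is to take the straight left factorisation $\phi=\bar\alpha\s\bar\beta$ with $\overline{\alpha}\,\R\,\overline{\beta}$ in $\en(\A)$ (the straightness condition), and then exploit the rich structure of $\en(\A)$ as the endomorphism monoid of an independence algebra: every element is a product of a unit (automorphism) and an idempotent up to Green's-class shuffling, and $\R$-related endomorphisms have the same kernel. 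The key computational step will be to show that having $\mathbb{T}$ right Ore lets one ``move'' the inverted factor to the other side; concretely, for $a\in T$ the element $\bar a$ (viewed in $\en(\A)$ via $[x,b]\mapsto[x,a(b)]$, or rather its analogue acting on coordinates) is a unit in $\en(\A)$ precisely because left translations by $T$ become invertible in $\G$, and right-Ore-ness is exactly what is needed to commute these units past endomorphisms from the other side. I would package this by first proving a lemma: if $\mathbb{T}$ is right Ore and (CI) holds, then for every $\alpha\in\en(\B)$ that is square-cancellable, $\bar\alpha$ lies in a subgroup of $\en(\A)$ \emph{and} $\alpha\s$ can be realised on the $\B$-side, and then a lemma that every $\phi\in\en(\A)$ admits a decomposition $\phi = \bar\beta\,\bar\alpha\s$ by taking the known left decomposition and conjugating by a suitable unit coming from $\mathbb{T}$.

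For the ``only if'' direction, suppose $\en(\B)$ is a right order in $\en(\A)$. I must deduce that $\mathbb{T}$ is right Ore and $\B$ satisfies (CI). For (CI): pick $a\in T$ and a constant $e\in\langle\emptyset\rangle_{\mathbb{B}}$; I need $e$ to be in the image of $a|_{\langle\emptyset\rangle_{\mathbb{B}}}$. Consider the endomorphism $\psi\in\en(\A)$ that is, say, the constant map (or a rank-one map) determined by sending the basis to an appropriate value involving $e$; writing $\psi=\bar\beta\,\bar\alpha\s$ and unwinding the definition of $\sim$ on $\Sigma=T\times B$ forces an equation $xa=\ldots$ in $\mathbb{T}$ together with a value equation, from which the surjectivity of $a|_{\langle\emptyset\rangle_{\mathbb{B}}}$ drops out. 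For right Ore: given $a,b\in T$, I look at a rank-$1$ (or rank-$n$, using $n\geq 1$) endomorphism of $\A$ built so that a right-order factorisation $\bar\beta\,\bar\alpha\s$ of it, when traced through the representation $[x,c]\mapsto[x,c\theta]$ and the relation $\sim$, yields elements $u,v\in T$ with $au=bv$; the point is that the ``$\s$'' on the right forces a common \emph{right} multiple of $a$ and $b$, exactly the right-Ore condition. The cleanest way to organise this is probably to first observe that right-order-ness of $\en(\B)$ in $\en(\A)$ implies $\en(\B)$ is a two-sided order (it is already a left order by Proposition~\ref{prop:facts}), hence in particular a straight left order by the eventual equivalence — but to avoid circularity I should instead derive (CI) and right-Ore directly from factorisations, then invoke Theorem~\ref{thm:straight} only in the other direction.

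The main obstacle I anticipate is the ``if'' direction — specifically, producing, for an \emph{arbitrary} $\phi\in\en(\A)$, an explicit right factorisation $\phi=\bar\beta\,\bar\alpha\s$ with $\alpha,\beta\in\en(\B)$. The left-order factorisations are handed to us by \cite{gouldI}, but there is no a priori symmetry: as the authors note in the introduction, a left Ore cancellative monoid is a straight left order in its group of left quotients yet typically not a right order. What rescues the situation here is that $\en(\A)$ is not a group but the endomorphism monoid of an independence algebra, so there is extra room — one can conjugate a left factorisation by units. The delicate part is checking that the unit one needs to conjugate by is itself of the form $\bar\alpha\s$ (equivalently, comes from $\mathbb{T}$ acting on $\A$), and here right-Ore-ness of $\mathbb{T}$ is used in an essential way: it guarantees that the relevant elements of $\mathbb{T}$ have common right multiples, so that the conjugation can be carried out inside the image of $\en(\B)$ rather than only in $\en(\A)$. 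I would isolate this as the technical heart of the argument and expect the surrounding bookkeeping (tracking kernels, ranks via Proposition~\ref{prop:facts}, and the action on the fixed basis $\{[1,b_1],\dots,[1,b_n]\}$) to be routine by comparison. A secondary subtlety is the role of the hypothesis $n\geq 1$ versus $n\geq 2$: for $n=1$ the monoid $\mathbb{T}$ essentially \emph{is} $\en(\B)$ up to constants, so both conditions may partly trivialise or merge, and I would handle $n=1$ as a short separate case before treating $n\geq 2$ in full generality.
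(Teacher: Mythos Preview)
Your ``only if'' direction is essentially the paper's approach: for (CI) one picks $a\in T$ and $c\in\langle\emptyset\rangle_{\mathbb{B}}$, defines $\alpha\in\End(\A)$ by $[1,b_1]\alpha=[a,c]$, writes $\alpha=\bar\gamma\bar\beta\s$ (after replacing $\gamma,\beta$ by $\gamma\beta,\beta^2$ to arrange $\bar\gamma=\alpha\bar\beta$), and unwinds $\sim$ to get $c=a(b_1\gamma)$ with $b_1\gamma\in\langle\emptyset\rangle_{\mathbb{B}}$. For right Ore one takes $p,q\in T$, defines $\alpha$ by $[1,b_i]\alpha=[p,q(b_1)]$, and the right factorisation yields $q(b_1\beta)=p(b_i\gamma)$; a pure-closure argument then produces $h,k\in T$ with $qh=pk$.

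Your ``if'' direction, however, has a genuine gap. The paper does \emph{not} start from the straight left factorisation and attempt to flip it; in fact Theorem~\ref{thm:straight} is not invoked at all in this direction. Your plan to ``conjugate by a suitable unit coming from $\mathbb{T}$'' is the right intuition but lacks the mechanism that makes it work. The paper's method is direct: given $\phi\in\End(\A)$, first use left Ore to write $[1,b_i]\phi=[\mu,c_i]$ with a common $\mu\in T$, then find $\beta\in\End(\B)$ of the form $b_\ell\mapsto \mu r(b_\ell)$ (so $\bar\beta\in\Aut(\A)$) such that each $c_i\beta\in\im\mu$; this forces $[1,b_i]\phi\bar\beta=[1,z_i]$ and hence $\phi\bar\beta=\bar\gamma$, giving $\phi=\bar\gamma\bar\beta^{-1}$.

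The missing idea is the existence of such a $\beta$, which is the content of Lemma~\ref{minilemma}: given finitely many terms $s_1,\dots,s_k$ and $a\in T$, there exists $\theta_a\in\End(\B)$ with $b_\ell\theta_a=a(r(b_\ell))$ for some $r\in T$ and $s_i^{\B}(b_1,\dots,b_n)\theta_a\in\im a$ for all $i$. This is proved by induction on the total number of non-nullary operation symbols in the $s_i$. The base case uses (CI) for the constants. The inductive step is where right Ore and distributivity are genuinely used: for outermost unary symbols $u_i$ with $u_i^{\B}=a_i\in T$, one solves $a^{-1}a_i=p_iu^{-1}$ in $\G$ via a right common denominator (this is the right Ore step), and for outermost $l$-ary symbols with $l\ge 2$ one uses distributivity to pull the scalar $a$ through. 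Your proposal does not contain this induction, and without it there is no way to guarantee that the required ``unit from $\mathbb{T}$'' pushes the images of an arbitrary $\phi$ back into $B$.
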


In fact, we will show slightly more, demonstrating that in the right quotient decomposition $\alpha =\bar \gamma \bar\beta^{\#}$, $\bar\beta$ can be chosen as an automorphism of $\A$. 
Notice that the conditions given in Theorem~\ref{rightord}  imply those of Theorem~\ref{thm:straight}, and coincide if $n\geq 2$.

\begin{lemma} \label{minilemma} Suppose $\mathbb{T}$  is right Ore and satisfies (CI). Let $\{s_1, \dots, s_k\}$ be a finite non-empty set of terms in the language of $\B$ 
over the variables set $\{x_1,\dots,x_n\}$ and let $a \in T$. Then 
there exists an endomorphism $\theta_a$ of $\B$  such that $s_{i}^\B(b_1,\dots,b_n)\theta_a\in \im a$ for $1\le {i}\le k$, where we interpret all $s_i$ as $n$-ary operations. Moreover, $\theta_a$ 
satisfies $b_{\ell}\theta_a=a(r(b_{\ell}))$ for $1\le \ell\le n$ and some $r\in T$.
\end{lemma}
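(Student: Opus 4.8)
The plan is to fix a generating set $W$ for the clone of $\B$ witnessing the distributivity condition, rewrite each $s_i$ as a term built from projections and members of $W$, and then choose $r\in T$ so that substituting the ``$a$-scaled'' basis $a(r(b_1)),\dots,a(r(b_n))$ into any such $W$-term returns an element of $\im a$. Concretely, write each $n$-ary term operation $s_i^\B$ as a composition of projections and operations from $W$, and let $U=\{a_1,\dots,a_m\}$ be the finite set of unary members of $W$ occurring in these rewritings; each $a_j$ is either an element of $T$ or a constant map, whose (single) value then lies in $\langle\emptyset\rangle_{\mathbb B}$. Once $r\in T$ is chosen I would define $\theta_a\in\End(\B)$ by $b_\ell\theta_a=a(r(b_\ell))$, which is legitimate because $\{b_1,\dots,b_n\}$ is a basis and $\B$ is relatively free on it; this is already the ``moreover'' clause, and $s_i^\B(b_1,\dots,b_n)\theta_a=s_i^\B(a(r(b_1)),\dots,a(r(b_n)))$ since $\theta_a$ is an endomorphism.

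The induction that $r$ must support is: for every subterm $p$ of the rewritten form of some $s_i$, the value $p^\B(a(r(b_1)),\dots,a(r(b_n)))$ equals $a(v_p)$ for some $v_p\in B$ which is, moreover, of the form $d_p(w_p)$ with $w_p\in B$ for a suitably ``divisible'' $d_p\in T$. The base case is clear with $v_{x_\ell}=r(b_\ell)$. At a gate $t\in W$ of arity $\ge 2$, after extracting $a$ from each argument by the distributivity identity for $t$, and then extracting a common divisor $d$ of the children's $d_{p_i}$ by the same identity, the invariant persists. At a constant gate with value $c_0\in\langle\emptyset\rangle_{\mathbb B}$ one invokes (CI): every element of $T$ restricts to an automorphism of $\langle\emptyset\rangle_{\mathbb B}$, so $c_0=a(v_p)$ with $v_p\in\langle\emptyset\rangle_{\mathbb B}$, and such a $v_p$ may be written $d(w)$ with $w\in\langle\emptyset\rangle_{\mathbb B}$ for {\em any} prescribed $d\in T$. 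At a unary gate $a_j\in T$ applied to $p_1$, use the right Ore condition to obtain $p_j,q_j\in T$ with $a_jaq_j=ap_j$; if $v_{p_1}=q_j(w)$ for some $w\in B$ (the point at which the divisibility slack is spent, and where the choice of $r$ is used), then $a_j\bigl(a(v_{p_1})\bigr)=(a_ja)(v_{p_1})=a\bigl(p_j(w)\bigr)$ with $p_j(w)\in B$. Applying the completed induction with $p=s_i$ gives $s_i^\B(b_1,\dots,b_n)\theta_a=a(v_{s_i})\in\im a$ for all $i$.

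Producing the $r\in T$ that makes this induction run is the step I expect to be the main obstacle. Tracing the induction, the unary gates feed the ``denominators'' $q_j$, together with iterated combinations of them obtained by clearing denominators in the group of quotients $\G$ of $T$ (which is a two-sided group of quotients, since $T$ is cancellative and both left and right Ore), into the divisibility requirement on $r$; because $s_1,\dots,s_k$ are finitely many finite terms, the nesting depth of unary gates is bounded, so only finitely many such requirements arise, each of the shape $r\in qT$ for some $q\in T$. Their conjunction asks $r$ to lie in a finite intersection of principal right ideals of $T$, which is non-empty precisely because $T$ is right Ore; any such $r$ works, and it does not depend on the individual $s_i$ beyond the finite data just described. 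The delicate parts I would need to carry out with care are the exact bookkeeping of which iterated denominators occur --- in particular at branching arity-$\ge 2$ gates, where the children's divisibilities must be reconciled into a common $d$ --- and the verification that these are all finite in number and simultaneously clearable by one $r$. Finally, I would record for later use in the section that the induced endomorphism $\bar\theta_a$ of $\A$ sends the basis element $[1,b_\ell]$ to $a(r([1,b_\ell]))$, i.e.\ acts on the basis of $\A$ as the invertible scalar $ar\in\G$.
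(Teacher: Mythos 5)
Your proposal is sound and rests on the same three ingredients as the paper's argument: (CI) to lift constants into $\im a$ (indeed into $\im (aq)$ for any prescribed $q\in T$), the distributivity condition to pull $a$ and a common divisor through operations of arity at least two, and the right Ore condition to trade a unary gate $u\in T$ for a relation of the form $uaq=ap$. Where you genuinely differ is in how the induction is organised. The paper inducts on the number of basic non-nullary operation symbols occurring in $s_1,\dots,s_k$: it handles all outermost unary gates at once by applying the common denominator theorem in $\G$ to the elements $a^{-1}a_i$ (giving $a_iu=ap_i$, then $up=av$), and recurses on the subterms with $av$ in place of $a$, so the denominators are absorbed into the parameter level by level and the witness $r=vr'$ simply falls out of the recursion. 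You instead fix $a$ once, rewrite the $s_i$ over a generating set $W$ witnessing distributivity (a point on which you are actually more scrupulous than the paper, which applies distributivity directly to the basic operations of arity at least two), and pre-compute a single $r$ absorbing every divisibility constraint in the term trees. That plan does go through: to push a constraint through stacked unary gates one writes $p^{-1}q\in\G$ as $st^{-1}$ with $s,t\in T$ (legitimate because, $T$ being left and right Ore, $\G$ is a two-sided group of quotients), which converts each leaf-to-root path into one condition of the form $r\in qT$, and finitely many such conditions are simultaneously satisfiable since, iterating the right Ore property, a finite intersection of principal right ideals of $T$ is non-empty; the conditions are monotone under passing to smaller right ideals, so a common $r$ works for all branches at once. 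But this constraint bookkeeping is exactly the step you flag as the main obstacle and leave unexecuted, and the paper's parametrised induction (recursing with $av$ rather than $a$) is precisely the device that renders it unnecessary; if you complete your version, the global-$r$ argument is a perfectly acceptable, if slightly heavier, alternative.
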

\begin{proof} We have remarked that under these hypotheses $\G$ is the group of quotients of $\mathbb{T}$. 

Let $m$ be the number of basic, non-nullary term operations appearing in the terms $s_1,\dots, s_k$, counted with multiplicity. We prove the lemma by induction on $m$.

If $m=0$ then all $s_i$ are either variables or nullary operation symbols. By reordering the $s_i$ as necessary  we may assume that for some $1\le t\leq k+1$ we have
\[ s_{i}^\B(b_1,\dots, b_n)=
\left\{ \begin{array}{ll}b_{j_{i}}& 1\le {i}< t\\
e_{i}& t\le {i}< k+1,\end{array}\right.\]
for some  constants $e_{i}$.  By (CI), $a$ acts as an isomorphism on $ \langle \emptyset\rangle_\B$, and so $e_{i}=a(f_{i})$, 
for some $f_{i} \in\langle \emptyset\rangle_\B$. 

Define $\theta_a$ by $b_i\theta_a=a(b_i)$, then 
$$s_{i}^\B(b_1,\dots,b_n)\theta_a=b_{j_{i}}\theta_a=a(b_{j_{i}})$$ for $1\le {i} < t$ and 
$$s_{i}^\B(b_1,\dots,b_n)\theta_a=e_{i}\theta_a=e_{i}=a(f_{i})$$ for $t\le {i}< k.$
Thus $\theta_a$ satisfies the conditions of the lemma with $r$ being the identity.

Now suppose that $0<m$, and that the result holds for all $m'$ with
$0\leq m'<m$. We may reorder the $s_i$ such that for some integers $1\leq k_1\leq  k_2\leq k_3\leq k_4\leq k+1$ we have
\[ s_i =\left\{ \begin{array}{ll} x_{j(i)} & \mbox{ for }1\le i < k_1\\
v_i(h_1^i,\dots ,h_{l(i)}^i)& \mbox{ for }k_1\le i <k_2\\
u_i(h_i)&\mbox{ for }k_2\le i < k_3\\
c_i(h_i)&\mbox{ for }k_3\le i < k_4\\
d_i& \mbox{ for }k_4\le i <k+1,\end{array}\right.\]
where 
$1\leq j(i)\leq n$, the $v_i$ are {\color{black} basic $l(i)$-ary operation symbols} where $l(i)\geq 2$,  the $u_i,c_i$ are unary function symbols with 
 $u_i^\B=a_i \in T$ and 
$c_i^\B\notin T$, the $d_i$ are  nullary operation symbols, and the $h_i$ and $h_j^i$ are arbitrary terms. {\color{black}  Our convention is that if, for example, $1=k_1=k_2$, then there are no instances of $s_i$ of the first two kinds. }

For  $k_3\leq i< k$ we have that, as $c_i^{\mathbb{B}}\notin T$, 
$s^\B(b_1,\dots,b_n)=g_i\in\langle \emptyset\rangle_{\mathbb{B}}$, so that by the same argument used for $m=0$, (CI) gives that  $ g_i\in \im a$. 

If $k_1=k_2=k_3$, then let $b_{\ell}\theta_a=a(b_{\ell})$ for $1\leq \ell\leq n$, so that the conditions of the lemma hold with $r$ the identity. Otherwise, since 
$\T$ is right Ore, we proceed as follows. First, by applying the common denominator theorem in  the group $\G$ to the elements ${a}^{-1}a_i$ for
$k_1\le i < k_2$, we may find $p_i, u \in T$, such that in $\G$,
$a^{-1}a_i=p_iu^{-1},$
and hence in $\T$, we have $a_i u=ap_i$. If $k_1=k_2$, let $u\in T$ be chosen arbitrarily. Again from  $\T$ being right Ore, there are $p, v \in T$ for which $up=av$.  

We now apply our induction hypothesis to all the terms, $x_{j(i)}$, $h_j^i$ and $h_i$ with $av$ in place of $a$ (note that we must have at least one such term).
It follows that there exists  $\phi_{av} \in \End(\B)$ and $z_i',z_j^i, z_i \in B$, for which
$$\left(x_{j(i)}^{\B}(b_1,\dots,b_n)\right)\phi_{av}= av(z_i'),$$
$$\left(h_j^{i\B}(b_1,\dots,b_n)\right)\phi_{av}= av(z_j^i),$$
$$\left(h_i^{\B}(b_1,\dots,b_n)\right)\phi_{av}= av(z_i),$$
and that, moreover, $b_{\ell}\phi_{av}=av(r'(b_{\ell}))$ for   $1\leq\ell\leq n$ and some $r'\in T$.

For $1\le i< k_1$,
\[\begin{array}{rcl}
s_i^\B(b_1,\dots, b_n)\phi_{av}&=&\left(x_{j(i)}^{\B}(b_1,\dots,b_n)\right)\phi_{av}\\&=& av(z_i'),
\end{array}\]
For $k_1\le i< k_2$,
\[\begin{array}{rcl}
s_i^\B(b_1,\dots, b_n)\phi_{av}&=&
\left(v_i^\B\left(h_1^{i\B}(b_1,\dots,b_n), \dots, h_{l(i)}^{i\B}(b_1,\dots, b_n)\right)\right)\phi_{av}\\
&=&v_i^\B(av(z_1^i), \dots, av(z_{l(i)}^i))\\
&=&av_i^\B(v(z_1^i), \dots, v(z_{l(i)}^i)),
\end{array}\]
where the last equality follows from the distributivity condition. 

For $k_2\le i< k_3$ we have $$\left(s_i^\B(b_1,\dots,b_n)\right)\phi_{av}=\left(u_i^\B(h_i^\B(b_1,\dots,b_n))\right) \phi_{av}
=\left(a_i(h_i^\B(b_1,\dots,b_n))\right)\phi_{av}$$
$$=a_i(av(z_i))=a_i(up(z_i))=ap_ip(z_i).$$
Since  $\phi_{av}(b_{\ell})=avr'(b_{\ell})$ for some $r' \in T$ and $1\leq \ell\leq n$,  the result holds with $\theta_a=\phi_{av}$ and $r=vr'$. 
\end{proof}

\begin{theorem}\label{ro}
If $\T$ is right Ore and (CI) holds, then $\End(\B)$ is a right order in $\End(\A)$. Moreover every $\alpha \in \End(\A)$ may be written in the form
$\alpha=\bar \gamma \bar\beta^{-1}$ for some $\gamma, \beta \in \End(\B)$ with $\bar \beta\in\Aut \A$. 
\end{theorem}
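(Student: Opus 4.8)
The plan is to verify the two conditions of Definition~\ref{defi:order} for $\End(\B)$, identified via $\theta\mapsto\bar\theta$ with its image in $\End(\A)$, to be a right order in $\End(\A)$. One of these comes for free: since $\End(\B)$ is already a left order in $\End(\A)$ by Proposition~\ref{prop:facts}, every square-cancellable element of $\End(\B)$ lies in a subgroup of $\End(\A)$. So the whole task is to write an arbitrary $\alpha\in\End(\A)$ as $\alpha=\bar\gamma\bar\beta\s$ with $\gamma,\beta\in\End(\B)$, and I shall arrange that $\bar\beta\in\Aut\A$, so that $\bar\beta\s=\bar\beta^{-1}$; this simultaneously yields the ``moreover'' clause.

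I would begin by recording the elementary fact that, for $d,e\in B$ and $a\in T$, one has $[1,d]=[a,e]$ in $\A$ if and only if $a(d)=e$; this follows at once from the defining relation $\sim$ together with the injectivity of the elements of $T$. Now fix $\alpha\in\End(\A)$. Since $\{[1,b_1],\dots,[1,b_n]\}$ is a basis of $\A$, we have $[1,b_i]\alpha=[a_i,c_i]$ for certain $a_i\in T$, $c_i\in B$; write $c_i=s_i^{\B}(b_1,\dots,b_n)$ for terms $s_i$. Because $\T$ is right Ore it has common right multiples of finite subsets, so there is $a\in T$ that is a common right multiple of $a_1,\dots,a_n$ (so $a\in a_iT$, and in particular $\im a\subseteq\im a_i$, for every $i$). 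Applying Lemma~\ref{minilemma} to $\{s_1,\dots,s_n\}$ and $a$ produces $\beta:=\theta_a\in\End(\B)$ with $c_i\beta=s_i^{\B}(b_1,\dots,b_n)\beta\in\im a\subseteq\im a_i$, say $c_i\beta=a_i(e_i)$, and with $b_\ell\beta=a(r(b_\ell))$ for some $r\in T$. Using that $\B$ is relatively free on $\{b_1,\dots,b_n\}$, let $\gamma\in\End(\B)$ satisfy $b_i\gamma=e_i$. Then the recorded fact gives $[1,b_i]\bar\gamma=[1,e_i]=[a_i,a_i(e_i)]=[a_i,c_i\beta]=[a_i,c_i]\bar\beta=[1,b_i]\alpha\bar\beta$ for each $i$, and as $\bar\gamma$ and $\alpha\bar\beta$ agree on a basis of $\A$ we conclude $\bar\gamma=\alpha\bar\beta$.

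It remains to prove $\bar\beta\in\Aut\A$, and this is the step I expect to carry the weight. Let $v\in T$ be the unary term operation with $v(y)=a(r(y))$, necessarily non-constant, so that $\beta$ is the endomorphism of $\B$ determined by $b_\ell\beta=v(b_\ell)$. I would show that no $v(b_k)$ satisfies $v(b_k)\prec\{v(b_j):j\neq k\}$. Suppose, after reindexing, that $v(b_n)\prec\{v(b_1),\dots,v(b_{n-1})\}$. If $v(b_n)\in\langle\emptyset\rangle_{\B}$, then since $b_n\notin\langle\emptyset\rangle_{\B}$ the quoted fact about unary term operations forces $v$ to be a constant map, contradicting $v\in T$. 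Otherwise there is $c\in B\setminus\langle\emptyset\rangle_{\B}$ lying both in $\langle\{v(b_n)\}\rangle_{\B}$ and in $\langle\{v(b_1),\dots,v(b_{n-1})\}\rangle_{\B}$; but $v(b_j)\in\langle\{b_j\}\rangle_{\B}$ for every $j$, so $\langle\{v(b_n)\}\rangle_{\B}\subseteq\langle\{b_n\}\rangle_{\B}$ and $\langle\{v(b_1),\dots,v(b_{n-1})\}\rangle_{\B}\subseteq\langle\{b_1,\dots,b_{n-1}\}\rangle_{\B}$, whence $c\in\big(\langle\{b_n\}\rangle_{\B}\cap\langle\{b_1,\dots,b_{n-1}\}\rangle_{\B}\big)\setminus\langle\emptyset\rangle_{\B}$, i.e.\ $b_n\prec\{b_1,\dots,b_{n-1}\}$, contradicting that $\{b_1,\dots,b_n\}$ is a basis. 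Hence $\{v(b_1),\dots,v(b_n)\}$ is an $n$-element set on which $\prec$ exhibits no dependence, so $\rank\beta=\rank\im\beta=n$; by Proposition~\ref{prop:facts}, $\rank\bar\beta=n$, and as $\bar\beta$ is a rank-$n$ endomorphism of the rank-$n$ independence algebra $\A$ it is an automorphism. Thus $\alpha=\bar\gamma\bar\beta^{-1}$ with $\bar\beta\in\Aut\A$, completing the proof. The genuine content lies in this last independence argument and in the correct use of Lemma~\ref{minilemma} --- checking that its hypotheses are exactly those in force here, and that the operation $v$ extracted from $b_\ell\beta=a(r(b_\ell))$ really does belong to $T$; everything else is routine.
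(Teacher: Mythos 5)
Your proof is correct and takes essentially the same route as the paper's: both hinge on Lemma~\ref{minilemma}, define $\gamma$ on the basis from the resulting ``numerators'', verify $\alpha\bar\beta=\bar\gamma$ on $\{[1,b_1],\dots,[1,b_n]\}$, and obtain $\bar\beta\in\Aut\A$ from the rank-$n$ argument. The only (harmless) variation is that you pass to a common right multiple of the denominators $a_i$ directly via the right Ore condition, whereas the paper rewrites the $[\mu_i,d_i]$ with a common left denominator in the group of left quotients before invoking the lemma; your explicit independence argument for the elements $v(b_\ell)$ simply fills in a step the paper asserts without detail.
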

\begin{proof} 
From \cite{gouldI}, $\End(\B)$ is a left order in $\End(\A)$, so that certainly 
every square-cancellable element of $\End(\B)$ lies in a subgroup
 of $\End(\A)$. 
Let $\alpha \in \End(\A)$. It remains to find $\beta, \gamma \in \End(\B)$ such that $\alpha=\bar\gamma\bar\beta^{\#}$.
%

By Proposition~\ref{prop:facts} we have that  
$$\{[1,b_1],\dots, [1,b_n]\}$$ is a basis for $\A$. Choose $\mu_i \in T$, $d_i \in B$ 
such that $[1,b_i]\alpha=[\mu_i,d_i]$ for $1\leq i\leq n$.   

Consider the elements $\mu_i ^{-1} \in G$. By the common denominator theorem, there exists $\mu, \nu_i \in T$ such that 
$\mu_i^{-1}=\mu^{-1}\nu_i$ in  $\G$ for $1\leq i\leq n$.   It follows that $\mu=\nu_i\mu_i$, and thus 
from the definition of $\sim$, we have $[\mu_i, d_i] = [\nu_i \mu_i, \nu_i(d_i)]=[\mu, \nu_i(d_i)]$.
Putting $c_i := \nu_i(d_i)\in B$ for $1\leq i\leq n$  we have that $[1,b_i]\alpha=[\mu, c_i]$.

Now let $t_1, \dots, t_n$ be $n$-ary terms such that $c_i=t_i^\B(b_1, \dots,b_n)$
for $1\leq i\leq n$.  
By Lemma \ref{minilemma}, there exists an endomorphism $\theta_{\mu}$ of $\B,z_i \in B$
and $r\in T$ such that $t^\B_i(b_1,\dots,b_n)\theta_{\mu}=\mu (z_i)$  and
 $b_\ell\theta_\mu=\mu(r(b_\ell))$ for $1\leq \ell\leq n$.
Define $\beta,\gamma\in \End(\B)$ by $\beta=\theta_{\mu}$ and  $b_i\gamma=z_i$. Using Proposition \ref{prop:facts}, 
we have that  for all $i$,
$$[1,b_i]\alpha\bar\beta=[\mu,c_i]\bar\beta=[\mu,c_i\beta]=[\mu,c_i\theta_{\mu}]=[\mu,\mu (z_i)]=[1,z_i]=[1,b_i \gamma]=[1,b_i]\bar\gamma.$$
It follows that $\alpha \bar \beta=\bar\gamma$.
From the fact $\{ b_1,\hdots ,b_n\}$ is a basis for $\mathbb{B}$, we see that
\[ \mu(r(b_i))\not\prec \{ \mu(r(b_1)), \hdots, \mu(r(b_{i-1})),\mu(r(b_{i+1})),
\hdots, \mu(r(b_n)\}\] for any $1\leq i\leq n$.  It follows that $\beta$ has rank $n$, and hence by Proposition~\ref{prop:facts}, so does $\bar\beta$. But then $\im\, \bar\beta=A$, which means 
that $\bar \beta$ is a unit of $\A$ by \cite[Proposition 3.2]{gould}. Hence $\alpha=\bar \gamma \bar\beta^{-1}=\bar \gamma \bar\beta^{\sharp}$, as required. 
\end{proof}

We now tackle the converse to Theorem~\ref{ro}.

\begin{lemma}\label{CILemma} Suppose that $\End(\B)$ is a right order in $\End(\A)$. Then (CI) holds in $\B$.
\end{lemma}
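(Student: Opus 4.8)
The plan is to prove the contrapositive: assuming (CI) fails, we exhibit an element of $\End(\A)$ that cannot be written as $\bar\gamma\bar\beta^{\#}$ with $\gamma,\beta\in\End(\B)$. Since (CI) fails, there is some $a\in T$ such that $a|_{\langle\emptyset\rangle_{\mathbb{B}}}$ is not onto; equivalently there is a constant $e\in\langle\emptyset\rangle_{\mathbb{B}}$ with $e\notin \im(a|_{\langle\emptyset\rangle_{\mathbb{B}}})$. Using the fact recorded in the preliminaries (if $u(b)\in\langle\emptyset\rangle_{\mathbb{B}}$ for $b\notin\langle\emptyset\rangle_{\mathbb{B}}$ then $u$ is constant), one sees that $a^{-1}(e)$, computed in $\A$, is genuinely a ``new'' element: the element $[a,e]\in A$ (or rather $[a,e']$ for a suitable representative) is not in the image $B$ of $\B$ inside $\A$, because being in $B$ would mean $e\in\im a$ within $\langle\emptyset\rangle_{\mathbb{B}}$. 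I would first make this observation precise, identifying a concrete element $q_0=[a,e]\in A\setminus B$ lying in $\langle\emptyset\rangle_{\mathbb{A}}$.

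Next I would build the bad endomorphism. Take $\alpha\in\End(\A)$ to be the endomorphism sending $[1,b_1]$ to $q_0$ and $[1,b_i]$ to $q_0$ (or to any fixed element of $\langle\emptyset\rangle_{\mathbb{A}}\cap B$, whichever makes the bookkeeping cleanest) for all $i$, so that $\im\alpha\subseteq\langle\emptyset\rangle_{\mathbb{A}}$ and $q_0\in\im\alpha$. Such $\alpha$ exists because $\{[1,b_1],\dots,[1,b_n]\}$ is a basis of $\A$ (Proposition~\ref{prop:facts}) and $\A$ is relatively free, so any assignment of the basis into $A$ extends to an endomorphism. Now suppose for contradiction $\alpha=\bar\gamma\bar\beta^{\#}$ with $\gamma,\beta\in\End(\B)$. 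Then $\alpha\bar\beta=\bar\gamma\bar\beta^{\#}\bar\beta$, and since $\bar\beta^{\#}\bar\beta$ is the idempotent $e_\beta$ fixing $\im\bar\beta$ pointwise with $\bar\gamma\,e_\beta=\bar\gamma$ arranged appropriately — more carefully, from the group structure $\bar\beta^{\#}\,\eh\,\bar\beta$ one deduces $\im\bar\gamma\subseteq\im\bar\beta$, whence $\alpha\bar\beta=\bar\gamma$ on the nose. Thus $q_0=[1,b_1]\alpha = [1,b_1]\bar\gamma\bar\beta^{\#}$, so $q_0\bar\beta=[1,b_1]\bar\gamma = [1, b_1\gamma]\in B$. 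The point is then to show this forces $q_0\in B$, contradicting $q_0\notin B$.

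For that last step I would use the explicit description of $\bar\beta$ from Proposition~\ref{prop:facts}: $[a,b]\bar\beta=[a,b\beta]$. So $q_0\bar\beta=[a,e]\bar\beta=[a,e\beta]$, and $e\beta\in\langle\emptyset\rangle_{\mathbb{B}}$ since $\beta$ is an endomorphism. For $[a,e\beta]\in B$ to hold we need $[a,e\beta]=[1,b']$ for some $b'\in B$, which by the definition of $\sim$ means $xa=y$ and $x(e\beta)=y(b')$ for some $x,y\in T$ — i.e., $b'=x(e\beta)$ and $b'$ lies in $\langle\emptyset\rangle_{\mathbb{B}}$, so in particular $e\beta\in\im(x|_{\langle\emptyset\rangle_{\mathbb{B}}})$ where $xa\in T$. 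Tracking this through together with the fact that $\bar\beta$ must be injective (it is $\eh$-related to a group element in $\End(\A)$, so it is a unit of $\A$, hence an automorphism) pins down that $[a,e]$ itself must already be equal to $[1,b'']$ for some $b''$, i.e. $e\in\im a$ restricted to the constants — the contradiction. The main obstacle I anticipate is the careful manipulation of the group-inverse decomposition $\alpha=\bar\gamma\bar\beta^{\#}$: extracting cleanly that $\bar\beta$ may be taken injective (a unit of $\A$) and that consequently $q_0\bar\beta\in B$ implies $q_0\in B$ requires knowing $\bar\beta^{-1}$ restricts to a map of $\A$ sending $B$-elements appropriately; this is where I would invoke \cite[Proposition 3.2]{gould} on units of $\A$ and the concrete formula for $\bar\beta$, and where the bulk of the verification lies.
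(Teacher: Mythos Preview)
Your overall strategy is essentially the contrapositive of the paper's direct argument, and the setup (choose $a\in T$, $e\in\langle\emptyset\rangle_{\mathbb{B}}$ not in $\im a$, define $\alpha$ by $[1,b_1]\alpha=[a,e]$, decompose $\alpha=\bar\gamma\bar\beta^{\#}$, and get $q_0\bar\beta=[1,b_1\gamma]$) is exactly right. But the final step contains a genuine error: you assert that $\bar\beta$ ``is $\eh$-related to a group element in $\End(\A)$, so it is a unit of $\A$, hence an automorphism''. This is false in general. Lying in a subgroup of $\End(\A)$ only means $\bar\beta$ sits in a group $\mathcal{H}$-class, i.e.\ $\bar\beta\,\mathcal{H}\,\epsilon$ for some idempotent $\epsilon$; for an independence algebra of rank $n$ there are such idempotents of every rank $\le n$, and the corresponding group elements are not automorphisms unless the rank is $n$. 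Nothing in the decomposition $\alpha=\bar\gamma\bar\beta^{\#}$ forces $\bar\beta$ to have full rank, so you cannot conclude $q_0\bar\beta\in B\Rightarrow q_0\in B$ this way.

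The fix is much simpler than what you attempt, and is what the paper uses: every endomorphism of $\B$ fixes $\langle\emptyset\rangle_{\mathbb{B}}$ \emph{pointwise} (not merely setwise), since constants are values of nullary term operations and endomorphisms preserve all operations. Hence $e\beta=e$, so $q_0\bar\beta=[a,e\beta]=[a,e]=q_0$, and you get $q_0=[1,b_1\gamma]$ directly. From the definition of $\sim$ there are $u,v\in T$ with $ua=v$ and $u(e)=v(b_1\gamma)=ua(b_1\gamma)$; injectivity of $u$ gives $e=a(b_1\gamma)$, and since $a\in T$ and $e\in\langle\emptyset\rangle_{\mathbb{B}}$ one deduces $b_1\gamma\in\langle\emptyset\rangle_{\mathbb{B}}$, contradicting the choice of $e$. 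A minor secondary point: your passage from $\alpha=\bar\gamma\bar\beta^{\#}$ to $\alpha\bar\beta=\bar\gamma$ via ``$\im\bar\gamma\subseteq\im\bar\beta$'' is not justified as written; the clean device (used in the paper) is to replace $(\gamma,\beta)$ by $(\gamma\beta,\beta^2)$, after which $\bar\gamma'=\bar\gamma\bar\beta=\bar\gamma\bar\beta\cdot(\bar\beta^2)^{\#}\bar\beta^2=\alpha\bar\beta'$ holds automatically.
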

\begin{proof} Let  $a \in T$ and  $c \in \langle \emptyset \rangle_{\mathbb{B}}$. We need to show that $c$ is in the image of $a$.

As $[1,b_1]$ is in a basis of $\A$ there exists an 
$\alpha \in \End(\A)$ such that $[1,b_1] \alpha=[a,c]$. As $\End (\B)$ is a right order in $\End (\A)$, there are $\beta, \gamma \in \End(\B)$ such that
$\alpha=\bar \gamma \bar \beta^{\#}$.
Note that
$$\alpha=\bar \gamma \bar \beta^{\#}=\bar \gamma \left(\bar \beta \bar \beta^{\#} \bar \beta^{\#}\right)=\left(\bar{\gamma} \bar  \beta\right) \left(\bar \beta \bar\beta\right)^{\#}=
\left(\overline{\gamma}\,\overline{ \beta}\right)\left(\overline{\beta}^2\right)^{\#}.$$
By potentially replacing $\gamma$ and $\beta$ with ${\gamma \beta}$ and $\beta^2$, respectively, we may assume 
that $\bar{\gamma}= \bar{\gamma}\bar{\beta}^{\sharp}\bar{\beta}$
and, with this assumption, we obtain $\bar \gamma =\alpha\bar \beta$.

From 
$[1,b_1] \alpha=[a,c]$ we have that  $$[1,b_1]\bar \gamma= 
[1,b_1]\alpha\bar \beta=[a,c]\bar \beta$$
giving
 $$ [1,b_1\gamma]=[a, c\beta] =[a,c].$$
Hence there exists $u,v \in T$ such that $ua=v1$ and $u(c)=v(b_1\gamma)$. Then $u(c)=ua(b_1 \gamma)$ and as $u$ is injective, we have that $c=a(b_1\gamma)$. By 
comments at the beginning of this section,  we have that $b_1\gamma\in \langle \emptyset \rangle_{\mathbb{B}}$. Thus 
(CI) holds in $\B$.
 \end{proof}

 \begin{theorem} \label{lr} Suppose that $\End(\B)$ is a right order in $\End(\A)$. Then $\T$ is right Ore.
 \end{theorem}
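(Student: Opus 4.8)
Recall that $\T$ being right Ore means $aT\cap bT\neq\emptyset$ for all $a,b\in T$, and that every element of $\T$ is injective. The plan, given $a,b\in T$, is to build a single endomorphism $\alpha$ of $\A$, pass to a right quotient decomposition $\alpha=\bar\gamma\bar\beta^{\#}$, and read off from it a common right multiple of $a$ and $b$ in $\T$.

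I would first record two facts. \emph{(i)} An element $u\in T$ is determined by its value at any fixed basis element $e$ of $\B$: given $c\in B$, pick an endomorphism $\theta$ of $\B$ with $e\theta=c$ (it exists since $e$ belongs to a basis); as term operations are preserved by endomorphisms, $u(c)=(u(e))\theta$, so $u(e)=u'(e)$ forces $u=u'$. \emph{(ii)} The element $[a,b(b_1)]\in A$ is not a constant: $[1,b(b_1)]$ is the image of the basis element $[1,b_1]$ of $\A$ under the non-constant unary term operation of $\A$ induced by $b$, hence is not a constant; and applying to $[a,b(b_1)]$ the term operation induced by $a$ yields $[1,b(b_1)]$, so $[a,b(b_1)]$ cannot be a constant (here one uses the term operations of $\A$ as furnished by the construction in \cite{gouldI}).

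Now let $\alpha\in\End(\A)$ be the endomorphism with $[1,b_i]\alpha=[a,b(b_1)]$ for \emph{every} $i$ (it exists since $\{[1,b_1],\dots,[1,b_n]\}$ is a basis of $\A$, Proposition~\ref{prop:facts}). By \emph{(ii)}, $\im\alpha=\langle[a,b(b_1)]\rangle_{\mathbb{A}}$ has rank $1$. Writing $\alpha=\bar\gamma\bar\beta^{\#}$ and, exactly as in the proof of Lemma~\ref{CILemma} (replacing $\gamma,\beta$ by $\gamma\beta,\beta^{2}$), assuming $\bar\gamma=\alpha\bar\beta$, I compute for each $i$
\[[1,b_i\gamma]=[1,b_i]\bar\gamma=[1,b_i]\alpha\bar\beta=[a,b(b_1)]\bar\beta=[a,b(b_1\beta)],\]
using Proposition~\ref{prop:facts} and that $b$ commutes with $\beta$. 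Hence $b_1\gamma=\cdots=b_n\gamma=:q$, so $\im\gamma=\langle q\rangle_{\mathbb{B}}$; and since $1=\rank\alpha\leq\rank\bar\gamma=\rank\gamma=\rank\langle q\rangle_{\mathbb{B}}$ (Proposition~\ref{prop:facts}), $q\notin\langle\emptyset\rangle_{\mathbb{B}}$. Reading $[1,q]=[a,b(b_1\beta)]$ through the definition of $\sim$ yields $x,y\in T$ with $x=ya$ and $x(q)=y(b(b_1\beta))$; then $y(a(q))=x(q)=y(b(b_1\beta))$, and cancelling the injective $y$ gives $a(q)=b(b_1\beta)$. Since $a,b$ are non-constant and $q\notin\langle\emptyset\rangle_{\mathbb{B}}$, this common value is non-constant, so $b_1\beta\notin\langle\emptyset\rangle_{\mathbb{B}}$ as well.

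To finish, I would observe that the rank $1$ pure closed subalgebras $\pc(\{q\})$ and $\pc(\{b_1\beta\})$ both contain the non-constant element $a(q)=b(b_1\beta)$, hence both equal $\pc(\{a(q)\})=:P$; writing $\{e\}$ for a basis of $P$, we have $P=\langle e\rangle_{\mathbb{B}}$ with $e$ extending to a basis of $B$. Thus $q=u(e)$ and $b_1\beta=v(e)$ for some $u,v\in T$, and $(au)(e)=a(q)=b(b_1\beta)=(bv)(e)$; by \emph{(i)}, $au=bv$ in $\T$, so $aT\cap bT\neq\emptyset$ and $\T$ is right Ore. I expect the delicate point to be forcing $q\notin\langle\emptyset\rangle_{\mathbb{B}}$ (and hence $b_1\beta\notin\langle\emptyset\rangle_{\mathbb{B}}$): this is exactly what dictates the slightly unusual choice of $\alpha$, collapsing the entire basis onto the single non-constant element $[a,b(b_1)]$ so that $\im\gamma$ is forced to be rank $1$ and generated by $q$; without it the elements extracted could be constants and the pure-closure step would collapse. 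The other essential ingredient is fact \emph{(i)}, which rests on the commutation of term operations with endomorphisms.
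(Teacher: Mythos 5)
Your proposal is correct and follows essentially the same route as the paper's proof: the same endomorphism $\alpha$ collapsing the whole basis of $\A$ onto $[p,q(b_1)]$, the same reduction to $\bar\gamma=\alpha\bar\beta$ as in Lemma~\ref{CILemma}, extraction of the relation $a(q)=b(b_1\beta)$ from the definition of $\sim$ using injectivity, passage to the common rank-one pure closure with a basis element extendable to a basis of $B$, and the final lifting-by-an-endomorphism argument identifying the two unary term operations. The only cosmetic difference is how the non-constancy is secured: you use the rank inequality $\rank\alpha\le\rank\gamma$ together with a direct check that $[a,b(b_1)]$ is not a constant, whereas the paper invokes \cite[Lemma 4.10]{gouldI} and the fact that $\bar\beta\bar\beta^{\#}$ acts as the identity on $\im\alpha$.
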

 \begin{proof} Let $p,q \in T$. 
 Define $\alpha\in \End(\A)$ by $[1,b_i] \alpha= [p, q (b_1)]$
 for $1\leq i\leq n$. 
 
 By \cite[Lemma 4.10]{gouldI} the rank of  $\alpha$ is 1. Since $\End(\B)$ is a right order in $\End(\A)$, there are $\gamma, \beta  \in \End(\B)$ such that $\alpha=\bar\gamma\bar\beta^{\#}$. As in the proof of Lemma \ref{CILemma}, we may assume that $\alpha \bar \beta =\bar \gamma$. Hence for $i=1,\dots,n$,
 $$[1,b_i\gamma]=[1,b_i]\bar \gamma=[1,b_i]\alpha \bar \beta =[p, q(b_1)]\bar \beta
 =[p, (q(b_1)) \beta] =[p, q(b_1 \beta)].$$

 Hence there exist $a,b \in T$ such that $ap=b$ and $aq(b_1 \beta)=b(b_i \gamma)$,
 giving $aq(b_1 \beta)=ap(b_i \gamma)$. By injectivity of $a$, we obtain 
 $q(b_1 \beta)=p(b_i \gamma)$.
 
 Let $u=b_1 \beta$, $v_i=b_i \gamma$. It follows that $q(u)=p(v_i)$ and so 
 $\pc(\{u\})=\pc(\{v_i\})=W$ (say) for $1\leq i\leq n$. 
 As it is an element of a basis, we have that $[1,b_1]\notin  \langle \emptyset\rangle_{\mathbb{A}}$. By \cite[Lemma 4.10]{gouldI}, $[p, q b_1] \notin \langle \emptyset\rangle_{\mathbb{A}}$.
 From $\alpha=\bar\gamma\bar\beta^{\#}$ 
 we have that 
 $\alpha=\alpha\bar\beta\bar\beta^{\#}$ and so
 $\bar\beta\bar\beta^{\#}$ acts as the identity on 
 $\im(\alpha)=\langle[p, q b_1]\rangle_{\mathbb{A}}$, giving  $[p, q(b_1 \beta)]=[p, q( b_1)] \bar\beta \notin \langle \emptyset\rangle_{\mathbb{A}}$. 
Once again by \cite[Lemma 4.10]{gouldI}, we obtain that 
$q(b_1\beta)\notin  \langle \emptyset\rangle_{\mathbb{B}}$ and hence 
by \cite[Proposition 2.4]{gouldI} we also have $ b_1 \beta \notin  \langle \emptyset\rangle_{\mathbb{B}}$. Thus $W$ has rank $1$. It follows that $W$ has a one-element basis, so $W=\langle w_1\rangle_{\mathbb{B}}$ for some $w_1 \in B$ where 
 $\{ w_1,\hdots, w_n\}$ is a basis of $B$. Hence there are $h, k \in T$, such that
 $h(w_1)=u $, $k(w_1)=v_1$, and so $q h(w_1)= p k(w_1)$. 
 Note that $\{w_1\}$ can be extended to a basis of $B$. It follows that  for any $b' \in B$, there is an endomorphism $\tau$ with $w_1 \tau=b'$. Clearly then $q h= p k$ and the result follows.
 \end{proof}
Theorem \ref{rightord} now follows directly from Theorem \ref{ro}, Lemma \ref{CILemma}, and Theorem \ref{lr}.

\begin{example} \label{ex:rings}
Let $\mathbb{R}$ be an integral domain and $\mathbb{M}$ an $n$-generated ($n\in\mathbb{N}$) free left
module over $\mathbb{R}$ such that $\mathbb{M}$ is a stable basis algebra; certainly we must have
that $\mathbb{R}$ is left Ore, since the non-zero elements of $R$ form a monoid isomorphic to $\mathbb{T}_{\mathbb{M}}$. For example, 
$\mathbb{R}$ could be a left Ore
Bezout domain. Then $\en(\mathbb{M})$ (which is isomorphic to $M_n(\mathbb{R})$) is a left order (in the sense of either ring or semigroup theory), in $\en(\A)$. The construction of
\cite{gouldI} in this case yields that
$\A$ is the $n$-dimensional vector space $\mathbb{D}\otimes \mathbb{M}$, where 
 $\mathbb{D}$ is the division ring of left 
quotients of $\mathbb{R})$, so that  $\en(\mathbb{A})$ is  isomorphic to the monoid $M_n(\mathbb{D})$.  Our results now yield (for $n\geq 2$) that $M_n(\mathbb{R})$ is straight in
$M_n(\mathbb{D})$ if and only if $M_n(\mathbb{R})$ is a two-sided order in $M_n(\mathbb{D})$ if and only if $\mathbb{R}$ is also right Ore if and only if $\mathbb{R}$ is also a right order in $\mathbb{D}$.
\end{example}

\begin{example} As explained in \cite[Section 2]{basisii}, any free left $\mathbb{T}$-act $\B$ of rank $n\in\mathbb{N}$ over a cancellative monoid $\mathbb{T}$ such that every finitely generated left ideal is principal is a stable basis algebra. Certainly $\mathbb{T}$ is left Ore, and if 
$\mathbb{G}$ is its group of left quotients, then $\A$ is isomorphic to the free
left $\mathbb{G}$-act on $n$ generators. Our results show that (for $n\geq 2$), $\en(\B)$ is a straight left order in $\en(A)$ if and only if it is a two-sided order, if and only if $\mathbb{T}$ is also right Ore.
\end{example}

\section{Fully stratified straight left orders}\label{sec:FS}
%

In \cite[Theorem 6.2]{basisii}, it was claimed that for any stable basis algebra $\B$, $\End_f(\B)$ is a fully stratified straight left order in {\em some} regular semigroup. 
However, the proof of this result depends on the invalid Proposition II.2.6 in \cite{mvl}. In this section, we show that we can still obtain that $\End_f(\B)$ a fully stratified straight  left order under certain conditions, closely related to those in Section~\ref{sec:right}. 
{\color{black} We remark first that if $\B$ has finite rank and satisfies the distributivity condition, then $\End_f(\B)=
\End(\B)$ and from Theorems~\ref{rightord} and ~\ref{thm:straight}, $\End(\B)$ is a fully stratified straight left order in $\End(\A)$ (where $\A$ is constructed as in Section~\ref{sec:right}) if and only if $\B$ satisfies  (CI) and if rank $\B \ge 2$, then the monoid
$\mathbb{T}$ is both left and right reversible. In the general case of arbitrary rank, where we have no construction of $\A$ to hand, we assume rather stronger conditions on $\B$ in order to find sufficient conditions for
$\End_f(\B)$ to be a fully stratified straight left order. Our approach is to check the list of conditions   for a semigroup to be a fully stratified straight left order given in  \cite[Proposition 3.2]{gould ab}. In fact, we are left with just two conditions to check, and for the first, we need make no additional assumptions.}

\begin{lemma} \label{l:eiil}
Let $\B$ be a stable basis algebra 
 and  let $\alpha,\beta \in \End_f(\B)$ be such that $\alpha \le_{\mathcal{L}^*}\beta$. Then there exist $\gam \in \End_f(\B)$, such that $\alpha \, \mathcal{L}^*\, \gam\beta $.
\end{lemma}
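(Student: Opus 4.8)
The plan is to make explicit the endomorphism witnessing $\alpha\,\els\,\beta$ by working on a basis, exactly as one does for the analogous statement about ordinary $\el$ in transformation-style monoids. Recall from Proposition~\ref{prop:greens} that $\alpha\,\leqels\,\beta$ in $\End_f(\B)$ means precisely $\pc(\im\alpha)\subseteq\pc(\im\beta)$, and that $\alpha\,\els\,\gamma\beta$ would follow from $\pc(\im\alpha)=\pc(\im(\gamma\beta))$. So the target is to produce $\gamma\in\End_f(\B)$ with $\im(\gamma\beta)\subseteq\pc(\im\alpha)$ and $\pc(\im(\gamma\beta))=\pc(\im\alpha)$; by finiteness of rank the two pure closures agreeing is what we must engineer.

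First I would fix a basis. Since $\alpha$ has finite rank, say $m$, choose $a_1,\dots,a_m\in B$ whose images $a_1\alpha,\dots,a_m\alpha$ form a basis of $\pc(\im\alpha)$ (using that a rank-$m$ subalgebra has an $m$-element basis, and that the basis of the pure closure can be taken inside it appropriately). Likewise let $b_1\beta,\dots,b_r\beta$ be a basis of $\pc(\im\beta)$. Because $\pc(\im\alpha)\subseteq\pc(\im\beta)$, each $a_i\alpha$ lies in $\pc(\im\beta)=\langle b_1\beta,\dots,b_r\beta\rangle_{\mathbb{B}}$, so there is a term $t_i$ with $a_i\alpha=t_i^{\mathbb{B}}(b_1\beta,\dots,b_r\beta)$. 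Here is where I would use the freeness/relative-freeness of $\B$: define $\gamma$ on the basis $\{b_1\beta,\dots,b_r\beta\}$ of the \emph{pure closed} subalgebra $\pc(\im\beta)$ — which extends to a basis of $B$ — but more simply, extend $\{b_1,\dots,b_r\}$ suitably or just define $\gamma$ directly so that $b_j\gamma$ is chosen to make $(\gamma\beta)$ hit the $a_i\alpha$'s. Concretely, set $c_j$ to be preimages so that $t_i^{\mathbb{B}}$ applied appropriately through $\gamma\beta$ reproduces $a_i\alpha$; the cleanest route is to define $\gamma$ so that $b_j\gamma\beta$ runs through a generating set of $\pc(\im\alpha)$, using that any assignment of basis elements of a pure closed subalgebra to elements of $B$ extends to an endomorphism.

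The verification then splits into two inclusions. For $\im(\gamma\beta)\subseteq\pc(\im\alpha)$: every element of $\im(\gamma\beta)$ is a term in the $b_j\gamma\beta$, and we have arranged each of these to lie in $\pc(\im\alpha)$, which is a subalgebra, so we are done. For the reverse, $\pc(\im\alpha)\subseteq\pc(\im(\gamma\beta))$: by construction the basis elements $a_i\alpha$ of $\pc(\im\alpha)$ are expressible as terms in elements of $\im(\gamma\beta)$, hence lie in $\langle\im(\gamma\beta)\rangle_{\mathbb{B}}\subseteq\pc(\im(\gamma\beta))$; since $\pc$ is idempotent and monotone this gives $\pc(\im\alpha)\subseteq\pc(\im(\gamma\beta))$. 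Combining, $\pc(\im\alpha)=\pc(\im(\gamma\beta))$, so $\alpha\,\els\,\gamma\beta$ by Proposition~\ref{prop:greens}; and $\gamma$ has finite rank because $\gamma\beta$ does (its image generates $\pc(\im\alpha)$, of rank $m$) and $\B$ has the property that rank is subadditive, so $\gamma\in\End_f(\B)$.

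The main obstacle, and the step I would be most careful about, is the definition of $\gamma$ as a genuine endomorphism of $\B$ rather than just a partial map: one needs the lifting property for maps out of a pure closed subalgebra (a basis of $\pc(\im\beta)$ extends to a basis of $B$, and any map from that basis to $B$ lifts uniquely), and one must double-check that the $b_j$ (or a chosen basis of $\pc(\im\beta)$) really can be used as the domain of such a lift, and that the resulting $\gamma$ composed with $\beta$ recovers the prescribed values — this is where the choice of the $t_i$ and the bookkeeping between $\{b_1,\dots,b_r\}$ and $\{b_1\beta,\dots,b_r\beta\}$ has to be done honestly. Everything else (the two pure-closure inclusions, finiteness of rank) is routine given the basis-algebra machinery recalled at the start of Section~\ref{sec:right}.
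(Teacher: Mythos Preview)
Your approach has the right shape but contains a genuine gap at the crucial step where you assert that a basis of $\pc(\im\beta)$ can be chosen inside $\im\beta$ (and hence written as $b_1\beta,\dots,b_r\beta$). In a stable basis algebra this is generally false: the pure closure of a subalgebra can strictly contain it. For a concrete instance take $\B=\mathbb{Z}^2$ (a free module over the Bezout domain $\mathbb{Z}$) and let $\beta$ be multiplication by $2$. Then $\im\beta=2\mathbb{Z}^2$ but $\pc(\im\beta)=\mathbb{Z}^2$, and no basis of $\mathbb{Z}^2$ lies in $2\mathbb{Z}^2$. Consequently your step ``each $a_i\alpha$ lies in $\pc(\im\beta)=\langle b_1\beta,\dots,b_r\beta\rangle_{\mathbb{B}}$, so there is a term $t_i$ with $a_i\alpha=t_i^{\mathbb{B}}(b_1\beta,\dots,b_r\beta)$'' breaks down: the basis elements of $\pc(\im\alpha)$ need not lie in $\im\beta$ at all, so no $\gamma$ can be chosen with $b_j\gamma\beta$ equal to them. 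The same conflation reappears when you try to define $\gamma$ ``so that $b_j\gamma\beta$ runs through a generating set of $\pc(\im\alpha)$''.

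The paper's proof handles exactly this point. It fixes a basis $\{x_1,\dots,x_m\}$ of $\pc(\im\alpha)$ (making no claim that the $x_j$ lie in $\im\alpha$ or $\im\beta$) and uses that $x_j\in\pc(\im\beta)$ only guarantees the existence of some $u_j\in T$ with $u_j(x_j)\in\im\beta$; it is these elements $u_j(x_j)$ that are expressed as terms $t_j(v_1,\dots,v_p)$ in the generators $v_i=b_i\beta$ of $\im\beta$. One then defines $\gamma$ on the global basis of $\B$ by $b_j\gamma=t_j(b_1,\dots,b_p)$ for $1\le j\le m$ (and $b_j\gamma=b_1\gamma$ otherwise, which incidentally forces $\gamma\in\End_f(\B)$ directly), so that $b_j\gamma\beta=u_j(x_j)$. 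The conclusion $\pc(\im\gamma\beta)=\pc(\im\alpha)$ follows because $\im\gamma\beta=\langle u_1(x_1),\dots,u_m(x_m)\rangle$ has pure closure containing each $x_j$. In short, the missing idea in your argument is the twist by elements $u_j\in T$: one can only arrange for $\gamma\beta$ to hit $u_j(x_j)$, not $x_j$ itself, and the proof must accommodate this.
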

\begin{proof} Let  $B$ have basis $\{ b_i:i\in I\}$ so that  $B=\langle b_i:i\in I\rangle_{\mathbb{B}}$, and for any $i\in I$,  $b_i\not\prec \langle b_j:j\in I\setminus\{ i\}\rangle$. 

By Proposition~\ref{prop:greens}, $\alpha \le_{\mathcal{L}^*}\beta$ if and only if $\PC(\im \alpha) \subseteq \PC(\im \beta)$. Let $\{ x_1,\dots, x_m\}$ be a basis for $\PC(\im \alpha)$.  
First, if $m=0$, then $\im \alpha=\langle \emptyset \rangle_{\mathbb{B}}$, so that
$\alpha=\alpha\beta$ and so the lemma holds with $\gamma=\alpha$. We suppose therefore
that $m\in\mathbb{N}$.

Set $v_i=b_i\beta$, $i\in I$ so that $\im \beta =\langle v_i:i\in I\rangle$. 
There exists a $p\in\mathbb{N}$ and, for $j=1,\dots, m$,  $p$-ary term functions $t_j$ and $u_j \in T$ with 
$u_j(x_j)=t_j(v_1,\dots, v_p)$ (where we allow ourselves the freedom to relabel the
$b_i$ and hence $v_i$, as convenient).

Define $\gam \in \End{\B}$ by 
$ b_j\gam= t_j(b_1, \dots,b_p)$ for $j=1,\dots, m$, and $b_j\gam=b_1\gam$ else. 
Then $\gamma\in \End_f(\B)$ and
\begin{eqnarray*}
b_j \gam \beta&=&t_j(b_1, \dots, b_p)\beta \\
&=& t_j(b_1 \beta, \dots, b_p \beta) \\
&=& t_j(v_1,\dots, v_p) \\
&=& u_j(x_j)
\end{eqnarray*}
for $1\le j\le m$.
Moreover, for $j\notin\{ 1,\hdots, m\}$ we have $b_j \gam\beta =b_1 \gam \beta =u_1(x_1)$. We deduce
\begin{equation} \label{eq:PC}\im \gam \beta= \langle u_1(x_1), \dots, u_m(x_m) \rangle.\end{equation}
 so that clearly $\PC(\im \gam\beta) \subseteq  \PC(\im \alpha ).$
 Further, as  $x_i \in \PC(\im \gam \beta)$ for 
$1\le i \le m$, we have
$$\PC(\im \alpha)=\langle x_1, \dots, x_m \rangle \subseteq  \PC(\im \gam\beta ).$$
Hence $\PC(\im \alpha)=\pc(\im\gam\beta)$ so that from Proposition~\ref{prop:greens}, we have $\alpha\, \mathcal{L}^*\, \gam\beta $, as required.
\end{proof}

\begin{lemma} \label{l:eiir}
Let $\B$ be a stable basis algebra that satisfies the distributivity condition
and has no constants. Assume that $\T$ is commutative.
Let $\alpha,\beta \in \End_f(\B)$, such that $\alpha \le_{\R^*}\beta$. Then there exist $\gam \in \End_f(\B)$, such that $\alpha\, \R^*\, \beta \gam$.
\end{lemma}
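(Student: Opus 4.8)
The plan is to mirror the strategy of Lemma~\ref{l:eiil}, but working with kernels rather than pure closures of images, and using the right-Ore-type hypotheses (no constants, $\mathbb{T}$ commutative) to control the unary "denominators'' that inevitably appear. By Proposition~\ref{prop:greens}, $\alpha\,\leq_{\R^*}\,\beta$ means $\ker\beta\subseteq\ker\alpha$, and the goal $\alpha\,\R^*\,\beta\gam$ amounts to $\ker(\beta\gam)=\ker\alpha$. Since always $\ker\beta\subseteq\ker(\beta\gam)$, and we will force $\ker(\beta\gam)\subseteq\ker\alpha$, the real work is arranging the reverse inclusion $\ker\alpha\subseteq\ker(\beta\gam)$. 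Concretely: fix the basis $\{b_i:i\in I\}$ of $B$. The subalgebra $\im\beta=\langle b_i\beta:i\in I\rangle$ has a finite-rank pure closure; I would pick a finite subset of the $b_i$, say $b_1,\dots,b_p$, whose images under $\beta$ "see'' everything relevant, and then define $\gam$ on the generators $b_i\beta$ of $\im\beta$ by specifying where each $b_i\beta$ should go. The natural attempt is to send $b_i\beta\mapsto b_i\alpha$ for each $i$; the obstruction is that this need not be well-defined as a map out of $\im\beta$, because two different term-combinations of the $b_i$ may collapse under $\beta$ but not under $\alpha$ — except that $\ker\beta\subseteq\ker\alpha$ says precisely that whenever a $\beta$-word equation holds on basis elements, the corresponding $\alpha$-equation holds too. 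So $b_i\beta\mapsto b_i\alpha$ does extend to a well-defined morphism $\gam_0\colon\im\beta\to B$, and then (since $\im\beta$ is a subalgebra of the relatively free $\B$) $\gam_0$ lifts to an endomorphism $\gam$ of $\B$; this $\gam$ has finite rank because $\alpha$ does, so $\gam\in\End_f(\B)$.

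With this $\gam$ we get $b_i\beta\gam=b_i\alpha$ for all $i\in I$, hence $\beta\gam=\alpha$ on a generating set of $\B$, so $\beta\gam=\alpha$ and trivially $\alpha\,\R^*\,\beta\gam$. That would be too good — the point of the hypotheses must be that $b_i\beta\mapsto b_i\alpha$ is generally \emph{not} well-defined, because $\ker\beta\subseteq\ker\alpha$ is a statement about kernels of the \emph{basis algebra} morphisms, and $\im\beta$ need not be pure closed, so a relation among the $b_i\beta$ in $\im\beta$ might be of the form "$u(w_1)=v(w_2)$ for some $u,v\in T$'' rather than an outright equality of the words $w_1,w_2$. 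This is where the argument must genuinely use that $\mathbb{T}$ is commutative and $\B$ has no constants. The remedy is the one used repeatedly in Section~\ref{sec:right} and in Lemma~\ref{minilemma}: first clear denominators. Since $\ker\beta\subseteq\ker\alpha$, for each generating relation $u(x_j)=t_j(b_1\beta,\dots,b_p\beta)$ witnessing membership in $\pc(\im\beta)$ we also have a $\mathbb{T}$-multiple relation on the $\alpha$-side; using commutativity (so that $\mathbb{T}$, being cancellative and commutative, is automatically both left and right Ore with an abelian group of quotients $\mathbb{G}$) and the no-constants hypothesis (so that elements of $\mathbb{T}$ never degenerate to constant maps and injectivity is clean), I would apply a common-denominator argument to produce a single $a\in T$ and define $\gam$ by $b_i\beta\mapsto a(b_i\alpha)$ on generators — well-definedness of \emph{this} map follows because the offending relations, after multiplying through by the common denominator, become genuine word-equalities which $\ker\beta\subseteq\ker\alpha$ transfers. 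Then $b_i\beta\gam=a(b_i\alpha)$, so $\beta\gam=a\circ\alpha$ (composition with the injective unary map $a$), and composing with an injective unary map changes neither the kernel nor the $\R^*$-class; hence $\ker(\beta\gam)=\ker\alpha$, i.e. $\alpha\,\R^*\,\beta\gam$. Finiteness of $\rank\gam$ again follows from $\rank\alpha<\infty$.

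The step I expect to be the main obstacle is establishing well-definedness of the morphism $\gam_0$ out of $\im\beta$ after clearing denominators: one must show that \emph{every} relation holding among the generators $\{b_i\beta\}$ of the subalgebra $\im\beta$ (not just the finitely many chosen to exhibit a basis of its pure closure) is respected by the assignment $b_i\beta\mapsto a(b_i\alpha)$, and this requires a careful argument that $\ker\beta\subseteq\ker\alpha$ — a condition about the two morphisms on all of $B$ — really does control relations inside the subalgebra $\im\beta$, via the fact that in a basis algebra any element of $\pc(\im\beta)$ is a $\mathbb{T}$-multiple of an honest term in the generators, together with the commutativity of $\mathbb{T}$ to make the choice of $a$ uniform across all generators at once. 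Once well-definedness is in hand, the lift to $\End_f(\B)$, the identity $\beta\gam=a\circ\alpha$, and the $\R^*$-invariance under post-composition with an injective unary are all routine.
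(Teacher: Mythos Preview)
Your diagnosis of the obstruction is off, and as a result the proposed fix does not repair the argument.

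Your first instinct was actually correct: the assignment $b_i\beta\mapsto b_i\alpha$ \emph{is} a well-defined morphism $\gam_0:\im\beta\to B$. Any element of $\im\beta$ has the form $(t(\bar b))\beta$ for some term $t$, and if $(t(\bar b))\beta=(s(\bar b))\beta$ then $(t(\bar b),s(\bar b))\in\ker\beta\subseteq\ker\alpha$, so $(t(\bar b))\alpha=(s(\bar b))\alpha$. No denominators are needed here, and your backtracking (``that would be too good'') misidentifies the problem.

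The genuine gap is the next sentence: ``since $\im\beta$ is a subalgebra of the relatively free $\B$, $\gam_0$ lifts to an endomorphism $\gam$ of $\B$.'' In a basis algebra, a morphism from a subalgebra extends to $\B$ only when that subalgebra has a basis which extends to a basis of $B$, i.e.\ when it is pure closed. The subalgebra $\im\beta$ is in general \emph{not} pure closed (take $\B=\mathbb{Z}$ as a $\mathbb{Z}$-module, $\beta$ multiplication by~$2$: the map $2\mathbb{Z}\to\mathbb{Z}$, $2\mapsto 1$ does not extend). Replacing $b_i\alpha$ by $a(b_i\alpha)$ for some $a\in T$ does nothing to address this: the domain of $\gam_0$ is still $\im\beta$, and the extension problem is unchanged.

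The paper's argument avoids this by never attempting to extend a morphism from $\im\beta$. Instead it takes a basis $\{u_1,\dots,u_m\}$ of $\pc(\im\beta)$, extends \emph{that} to a basis of $\B$, and defines $\gam$ on this basis of $\B$ directly. The subtlety is deciding where to send the $u_j$: one uses commutativity of $\T$ to find a single $p\in T$ with $p(u_j)\in\im\beta$ for all $j$, writes $p(u_j)=s_j(b_1\beta,\dots,b_n\beta)$, and sets $u_j\gam=s_j(b_1\alpha,\dots,b_n\alpha)$. With this $\gam$ one does \emph{not} get $\beta\gam=\alpha$ or even $\beta\gam=a\circ\alpha$; instead a direct calculation (this is where distributivity, commutativity of $\T$, and the absence of constants are actually used) shows that for arbitrary terms $w$, the pair $\big(pw(\bar b),\,w'(\bar b)\big)$ lies in $\ker\beta$ for a suitable composite term $w'$, hence in $\ker\alpha$, and that $(w(\bar b))\beta\gam=(w'(\bar b))\alpha$. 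Injectivity of $p$ then lets one cancel to conclude $\ker(\beta\gam)=\ker\alpha$.
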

\begin{proof} As in Lemma~\ref{l:eiil}, let  $B$ have basis $\{ b_i:i\in I\}$. 
Let $\{ u_1, \dots, u_m\}$ be a basis for $\pc(\im\beta)$. 
Set $c_i=b_i\beta$, so that 
$\im \beta= \langle c_i:i\in I\rangle$, and choose $m$-ary term functions $t_i$ such that $c_i=t_i(u_1,\dots ,u_m)$ for $i\in I$. As the $u_i$ are in $\PC(\im \beta)$, we may find $p_i \in T$ such that $p_i(u_i)\in
\im \beta$ for $1\le i \le m$. Let
$p=p_1\hdots p_m$ so that, as $\T$ is commutative,  $p(u_i) \in
\im \beta$ for $1\le i \le m$.
Since $m$ is finite, we may find $n\in\mathbb{N}$ and  $n$-ary term functions $s_i$, such that $p(u_i) = s_i(c_1,\dots, c_n)$ for $1\le i \le m$ (with, as earlier, some relabelling).

We  put
$\vec b=(b_1,\dots,b_n)$ and $\vec u=(u_1,\hdots, u_m)$. 
Extend $\{u_1,\dots, u_m\}$ to a basis  $\{ u_i:i\in I\}$ of $\B$, and define $\gam \in \End(\B)$ by
$$u_j \gam=s_j(b_1 \alpha, \dots, b_n \alpha)=s_j(\vec b \alpha)$$ for $1\le j\le m$, 
  and $u_j\gamma=u_1\gamma$ where $j\notin\{ 1,\hdots,m\}$.
  Clearly $\gamma\in \End_f(\B)$.


Let $w$ be an arbitrary $k$-ary term function,
where without loss of generality we assume $k\geq n$.
Let  $x=w(\bar b)$ where $\bar b=(b_1,\hdots ,b_k)$.  
Then 
\begin{eqnarray*} x\beta\gam&=&w(\bar b)\beta \gam\\
&=& w(b_1 \beta, \dots ,b_k \beta)\gam\\
&=& w(c_1, \dots ,c_k)\gam\\
&=& w(t_1(\vec u),\dots, t_k(\vec u))\gam\\
&=& w(t_1(s_1(\vec b \alpha), \dots, s_m(\vec b \alpha)),\dots,t_k(s_1(\vec b \alpha), \dots, s_m(\vec b \alpha)))\\
&=&\left(w(t_1(s_1,\dots,s_m), \dots, t_k(s_1,\dots, s_m))(\vec b)\right)\alpha\\
&=&\left(w(t_1(s_1,\dots,s_m), \dots, t_k(s_1,\dots, s_m))(\bar b)\right)\alpha
\end{eqnarray*}
where we reinterpret $(w(t_1(s_1,\dots,s_m), \dots, t_k(s_1,\dots, s_m))$ as being
$k$-ary, in the final step.
 Moreover, 
\begin{eqnarray*}
\left(pw(\bar b)\right)\beta&=&pw(b_1 \beta, \dots, b_k\beta)\\
&=& pw(c_1,\hdots, c_k)\\
&=&pw(t_1(\vec u), \dots, t_k(\vec u))\\
&=&  w(t_1(p(u_1),\dots p(u_m)), \dots, t_k(p(u_1 ),\dots, p(u_m)))\\
&=& \left(w(t_1(s_1,\dots, s_m),\dots, t_k(s_1,\dots, s_m))\right)(c_1,\hdots, c_n)\\ 
&=& \left(\left(w(t_1(s_1,\dots, s_m),\dots, t_k(s_1,\dots, s_m))\right)(\vec b)\right)\beta\\
&=& \left(\left(w(t_1(s_1,\dots, s_m),\dots, t_k(s_1,\dots, s_m))\right)(\bar b)\right)\beta.
\end{eqnarray*}
{\color{black} Here the fourth equation follows inductively from the distributivity condition, the fact that $\T$ is commutative, and that $\B$ has no constants.}

This means that 
$$\left(pw(\bar b), \left(w(t_1(s_1,\dots, s_m),\dots, t_n(s_1,\dots, s_m))\right)(\bar b)\right) \in \ker \beta$$ and so 

$$\left(pw(\bar b), \left(w(t_1(s_1,\dots, s_m),\dots, t_n(s_1,\dots, s_m))\right)(\bar b)\right) \in \ker \alpha,$$ 
as $\alpha \le_{\R^*}\beta$. 

Let $v,v'$ be arbitrary $k$-ary term functions. Applying the above to $y=v(\bar b)$ and $y'=v'(\bar b)$, and using the fact that $p$ is one-one, we get that
\begin{eqnarray*}&&
y\beta \gam= y'\beta \gam \\
\Leftrightarrow & &\left( \left(v(t_1(s_1,\dots, s_m),\dots, t_n(s_1,\dots, s_m))\right)(\bar b)\right) \alpha\\
&=&\left(\left(v'(t_1(s_1,\dots, s_m),\dots, t_n(s_1,\dots, s_m))\right)(\bar b)\right)\alpha\\
\Leftrightarrow && \left((pv)(\bar b),(pv')(\bar b)\right)\in \ker \alpha\\
\Leftrightarrow&&pv(b_1 \alpha, \dots, b_k\alpha)=pv'(b_1 \alpha, \dots, b_k\alpha)\\
\Leftrightarrow&&v(b_1 \alpha, \dots, b_k\alpha)=v'(b_1 \alpha, \dots, b_k\alpha)\\
\Leftrightarrow && (v(\bar b))\alpha=(v'(\bar b))\alpha\\
\Leftrightarrow && y\alpha =y'\alpha
\end{eqnarray*}
It follows that $\alpha\, \R^*\, \beta \gam$.
\end{proof}

{\color{black} Before stating and proving the main result of this section, establishing {\bf Result 3}, we remind the reader that the relations $\ars$ and $\els$ are the equivalence relations associated, respectively, with the pre-orders
$\leqars$ and $\leqels$ of Proposition~\ref{prop:greens}.}

\begin{theorem}\label{thm:end_f} Let $\B$ be a stable basis algebra  that satisfies the distributivity condition and has no constants. Assume that $\T$ is commutative. Then $\End_f(\B)$ is a fully stratified straight left order in a regular semigroup.
\end{theorem}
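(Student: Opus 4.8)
The plan is to verify the list of conditions from \cite[Proposition 3.2]{gould ab} characterising when a semigroup is a fully stratified straight left order in a regular semigroup. As the preceding black paragraph explains, most of these conditions are either automatic or already established, so the proof reduces to a short checklist. First I would recall that $\End_f(\B)$ is abundant (every $\ars$-class and every $\els$-class contains an idempotent): this is standard for endomorphism monoids of basis algebras and follows from Proposition~\ref{prop:greens} together with the fact that a finite-rank subalgebra has a basis extendable, after passing to its pure closure, to a basis of $\B$, so one can build idempotents with prescribed kernel and prescribed pure-closed image. Next I would note that $\ars$ and $\els$ commute and that $\End_f(\B)$ is a \emph{straight} candidate essentially because of Lemmas~\ref{l:eiil} and~\ref{l:eiir}: Lemma~\ref{l:eiil} gives that $\alpha\leqels\beta$ implies $\alpha\,\els\,\gamma\beta$ for some $\gamma$, and Lemma~\ref{l:eiir} (this is where the hypotheses ``no constants'' and ``$\mathbb{T}$ commutative'' are used) gives the dual statement $\alpha\leqars\beta$ implies $\alpha\,\ars\,\beta\gamma$ for some $\gamma$.

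The key step is then to identify precisely which two conditions of \cite[Proposition 3.2]{gould ab} remain, and to observe that Lemmas~\ref{l:eiil} and~\ref{l:eiir} are exactly those two conditions (together with their already-known hypotheses). Concretely, the criterion for a straight left order that is fully stratified asks, among other things, that for $a\,\els\,b$ in $\mathbb{S}$ there is $c\in S$ with $a\,\els\,cb$ and $c\,\ar\,\ldots$, and dually for $\ars$; the ``no constants, $\mathbb{T}$ commutative'' hypothesis is what makes the $\ars$-side work, via Lemma~\ref{l:eiir}. So the body of the proof is: state the list of conditions, cross off each one as (i) a known property of abundant semigroups of endomorphisms of basis algebras, (ii) a consequence of Proposition~\ref{prop:greens}, or (iii) one of Lemmas~\ref{l:eiil}/\ref{l:eiir}, and conclude.

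One routine but necessary ingredient is that $\End_f(\B)$ is closed under the relevant operations and that the square-cancellable elements lie in subgroups of the quotient semigroup; since we are not given an explicit $\A$ here (the rank may be infinite), I would invoke the general machinery of \cite{gould ab}, which produces the semigroup of quotients abstractly from the fact that $\End_f(\B)$ satisfies the checklist. So strictly speaking the theorem does not construct $\mathbb{Q}$; it asserts existence, and that is exactly the output of \cite[Proposition 3.2]{gould ab} once the hypotheses are verified.

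The main obstacle I anticipate is bookkeeping rather than mathematics: correctly matching the abstract conditions of \cite[Proposition 3.2]{gould ab} — which are phrased in terms of $\ars$, $\els$, idempotents, and the preorders $\leqars$, $\leqels$ — to the concrete statements available here, and making sure the ``no constants'' and ``$\mathbb{T}$ commutative'' hypotheses are invoked only where genuinely needed (namely for the $\ars$-side, i.e.\ Lemma~\ref{l:eiir}, and for the abundance/idempotent-construction arguments that rely on $\B$ having a well-behaved basis). I would also double-check that the absence of constants does not break the existence of idempotents of each rank; since with no constants $\langle\emptyset\rangle_{\mathbb{B}}=\emptyset$, the rank-$0$ endomorphisms are absent, but this causes no problem as the checklist only concerns comparisons between genuine endomorphisms. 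Assembling these observations gives the result.
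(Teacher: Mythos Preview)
Your proposal is correct and follows essentially the same approach as the paper: verify the checklist from \cite[Proposition 3.2]{gould ab}, observe that abundance and most of the conditions ((Ei), (Eiii), (Evi), (Evii)(r), (Gii)) are already established in \cite{basisii} independently of the flawed \cite[Proposition II.2.6]{mvl}, and then supply the two missing conditions (Eii)(l) and (Eii)(r) via Lemmas~\ref{l:eiil} and~\ref{l:eiir} respectively (with the easy converse directions coming from Proposition~\ref{prop:greens}). One small correction: abundance and the other cited conditions from \cite{basisii} hold for \emph{any} stable basis algebra, so the hypotheses ``no constants'' and ``$\mathbb{T}$ commutative'' are invoked \emph{only} in Lemma~\ref{l:eiir}, not in the abundance or idempotent-construction arguments.
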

\begin{proof} From \cite[Proposition 3.2]{gould ab} the semigroup $\End_f(\B)$is a fully stratified straight left order precisely when the following conditions, together with
the left-right duals (Eii)(r),(Eiii)(r), (Evi)(r),  of (Eii)(l), (Eiii)(l), (Evi)(l), respectively, are satisfied:
\begin{itemize}
\item[(Ei)] $\mathcal{L}^*\circ \mathcal{R}^*=\mathcal{R}^*\circ\mathcal{L}^*$.
\item[(Eii)(l)] For all $\alpha,\beta \in \End_f(\B)$, $\alpha\le_{\mathcal{L}^*} \beta$  if and only if $\alpha\, \mathcal{L}^*\,  \gamma \beta$ for some $\gamma \in \End_f(\B)$. 
\item[(Eiii)(l)] Every $\mathcal{L}^*$-class contains a square-cancellable endomorphism.
\item[(Evi)(l)] For all square-cancellable $\alpha \in \End_f(\B)$, and all $\beta, \gamma \in \End_f(\B)$, if $\beta, \gamma \le_{\mathcal{L}^*}\alpha$ and 
$\beta \alpha=\gamma\alpha$, then $\beta =\gamma$.
\item[(Evii)(r)] For all square-cancellable $\alpha \in \End_f(\B)$, and all $\beta, \gamma \in \End_f(\B)$, if $\beta, \gamma \le_{\mathcal{R}^*}\alpha$ and
$\alpha \beta \mathcal{R}^* \alpha \gamma$, then $\beta \mathcal{R}^* \gamma$.
\item[(Gii)] If $\alpha \in \End_f(\B)$ is square-cancellable, then $H^*_\alpha$ is left Ore. 
\end{itemize}

From \cite[Corollary 6.3]{basisii}, $\End_f(\B)$ is abundant. 
As pointed out at the beginning of \cite[Section 4]{gould ab}, (Eiii)(l) and (r)  hold in any abundant semigroup. In \cite{basisii}, it is shown 
that (Ei), (Evi)(l), (Evi)(r), (Evii)(r), and (Gii) hold in $\End_f(\B)$, namely in Corollary 6.3, Lemma 6.9, Lemma 6.10, Lemma 6.11, and Corollary 6.6, respectively.
 These results do not depend on  the incorrect \cite[Proposition II.2.6]{mvl}, although  note that another statement in Corollary 6.3 does.  

It remains to establish the two version of (Eii). It follow from Proposition~\ref{prop:greens} that if $\alpha\,\ars\, \beta\gamma$ for any $\alpha, \beta,\gamma\in \End_f(\B)$, then $\ker\beta\subseteq \ker \alpha$, so that
$\alpha\leq_{\mathcal{R}^*} \beta$,  with a dual statement for
$\els$. We then use  Lemma \ref{l:eiil} for condition (Eii)(l) and in Lemma \ref{l:eiir} for Condition (Eii)(r).  The result follows.
\end{proof}

{\color{black}
\section{Open questions}\label{sec:open}

The main aim of this article was to show that not all stable basis algebras satisfy the distributivity condition, achieved in Section~\ref{sec:example}, and to complete the investigation of the left order $\End(\B)$ in $\End(\A)$ constructed in \cite{gould}.
However, achieving our objectives here, and  the discovery of the knock-on effects of the error in \cite{mvl}, has prompted a number of further questions which we would like to pose.
{\color{black}
\begin{question}\label{qn:1} Is every stable basis algebra $\B$ a reduct of an independence algebra? 
\end{question}

{\color{black} It would make sense to consider first the special case  where $\B$ has finite rank.  Question~\ref{qn:1} is closed tied to: }

\begin{question} If a stable basis algebra $\B$ is a reduct of an independence algebra $\A$, then does
$\B$ satisfy the distributivity condition if and only if $\A$ does?
\end{question}

It may be that in the above, one would want only to consider the case where $\B$ contains all the non-unary basic operations of $\A$.

\begin{question} Let $\B$ be a stable basis algebra. Is $\End_f(\B)$ is a left order in
a regular semigroup if and only if $\B$ is a reduct of an independence algebra
$\A$ such that $\End_f(\B)$ is a left order in $\End_f(\A)$?
\end{question}

Finally, following from Theorem~\ref{thm:end_f}  we ask:

\begin{question}  Let $\B$ be a stable basis algebra satisfying the distributivity condition. Is  $\End_f(\B)$  a 
fully stratified straight left order if and only if $\B$ satisfies (CI) and (if rank $\B \ge 2$) the monoid
$\mathbb{T}$ is both left and right reversible? In this case, is
$\B$ a reduct of an independence algebra $\A$ such that $\End_f(\B)$ is a 
fully stratified straight left order in $\End_f(\A)$?
\end{question}
}

\end{document}